\newcommand{\bq}{\mathbb Q}
\newcommand{\bz}{\mathbb Z}
\newcommand{\bF}{\mathbb F}
\newcommand{\co}{\mathcal O}
\newcommand{\et}{\mathrm{et}}
\DeclareMathOperator{\Spec}{Spec} 
 \DeclareMathOperator{\trace}{Tr}
\DeclareMathOperator{\speci}{sp}
 \DeclareMathOperator{\Gal}{Gal}
\begin{document}

\newtheorem{theorem}{Theorem}[section]
\newtheorem{lemma}{Lemma}[section]
\newtheorem{prop}{Proposition}[section]
\newtheorem{cor}{Corollary}[section]
\newtheorem{defi}{Definition}[section]
\newtheorem{rem}{Remark}[section]
\newtheorem{exam}{Example}[section]
\newtheorem{conjecture}{Conjecture}[section]

\numberwithin{equation}{section}

\title[On the $p$-adic local invariant cycle theorem]{On the p-adic local invariant cycle theorem}

\author [Yi-Tao Wu]{Yi-Tao Wu}
\thanks{ 2010 Mathematics Subject Classification 14F20 14F30 \\ Supported by DFG SFB1085(Higher Invariants)}
\maketitle \vspace{-0.6cm}

\bigskip

\begin{abstract} For a proper, flat, generically smooth scheme $X$ over a complete DVR with finite residue field of characteristic $p$, we define a specialization morphism from the rigid cohomology of the geometric special fibre to $D_{crys}$ of the $p$-adic \'etale cohomology of the geometric generic fibre, and we make a conjecture ("$p$-adic local invariant cycle theorem") that describes the behavior of this map for regular $X$, analogous to the situation in $l$-adic \'etale cohomology for $l\neq p$. Our main result is that, if $X$ has semistable reduction, this specialization map induces an isomorphism on the slope $[0,1)$-part.  \end{abstract}

\bigskip

\section{\bf Introduction}

\subsection{} Let $R$ be a complete discrete valuation
ring with quotient field $K$ and finite residue field $k$ of
characteristic $p$. We choose a separable closure $\bar{K}$ of $K$ (resp. $\bar{k}$ of $k$)
and set $S=\Spec(R)$, $\eta=\Spec(K)$, $s=\Spec(k)$, $\bar{\eta}=\Spec(\bar{K})$ and $\bar{s}=\Spec(\bar{k})$.
We denote by $I\subseteq G :=
\Gal(\bar{K}/K)$ the inertia subgroup, by $W(k)$ the ring of Witt
vectors of $k$, and by $K_0$ (resp. $\hat{K}_0^{ur}$) the fraction field of $W(k)$ (resp. $W(\bar{k})$).

For any scheme $X$ over $S$, we have a \textsl{ geometric special fiber} $X_{\bar{s}}$
over $\bar{k}$ and a \textsl{geometric generic fiber} $X_{\bar{\eta}}$ over $\bar{K}$, i.e.
a commutative diagram
\[
\begin{CD}
X_{\bar{s}} @>>> X @<<< X_{\bar{\eta}}\\
@VVV   @VVV    @VVV\\
\bar{s}  @>>> S @<<< \bar{\eta}
\end{CD}
\]
where the squares are Cartesian.  If $X\to S$ is moreover proper and flat one has the
specialization morphism on $l$-adic \'etale cohomology groups:
$$ \speci: H^i(X_{\bar{s}}, \mathbb{Q}_l) \to H^i(X_{\bar{\eta}},\mathbb{Q}_l)^I$$
which is the composition
$$ H^i(X_{\bar{s}}, \mathbb{Q}_l) \cong H^i(X', \mathbb{Q}_l) \to H^i(X'_{\eta}, \mathbb{Q}_l) \to H^i(X_{\bar{\eta}},\mathbb{Q}_l)^I$$
where $X'$ is the base change of $X$ to a strict Henselization of
$S$ at $\bar{s}$ and the first isomorphism results from proper base change.
This map $\speci$ is $G$-equivariant. We note that all Frobenius eigenvalues on both source and target are known to be Weil numbers,
so we have a natural increasing weight filtration $W_j$ on both sides.

\begin{conjecture} ("local invariant cycle theorem") Assume $X$ is regular and $l\neq p$. Then
\begin{enumerate}
\item $\speci$ is an epimorphism
\item $\speci$ induces an isomorphism on $W_{i-1}$
\end{enumerate}
\label{ladicinvcyc}\end{conjecture}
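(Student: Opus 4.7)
The plan is to follow the classical strategy: reduce to strict semistable reduction via de Jong--Gabber alterations, and then use the Rapoport--Zink weight spectral sequence together with the (known) local semistable monodromy-weight theorem to identify the image of $\speci$ with $\ker N$ and to control its kernel by purity.

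\emph{Step 1 (reduction to semistable).} By the de Jong--Gabber alteration theorem one produces a generically finite proper surjection $\pi\colon X' \to X$ and a finite extension $K'/K$ such that $X'$ is strictly semistable over the ring of integers of $K'$. Since trace composed with pullback is multiplication by the generic degree, $H^i(X_{\bar{s}}, \bq_l)$ is a direct summand of $H^i(X'_{\bar{s}}, \bq_l)$, and similarly on the geometric generic fiber. These splittings are Galois-equivariant, compatible with $\speci$, and strict for the weight filtration, so both claims reduce to the case where $X/S$ is strictly semistable.

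\emph{Step 2 (weight spectral sequence).} Assume now that $Y = X_{\bar{s}} = \bigcup Y_i$ is a strict normal crossings divisor in $X$, and let $Y^{(j)}$ denote the disjoint union of the $j$-fold intersections. The Rapoport--Zink weight spectral sequence
\[
E_1^{-r,\,i+r} = \bigoplus_{s \geq \max(0,-r)} H^{i-r-2s}\bigl(Y^{(r+2s+1)}, \bq_l\bigr)(-r-s) \;\Longrightarrow\; H^i(X_{\bar{\eta}}, \bq_l)
\]
degenerates at $E_2$, its abutment filtration coincides with the monodromy filtration of the unipotent operator $N$, and by the local semistable monodromy-weight theorem these agree with the weight filtration on the right-hand side. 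In particular $H^i(X_{\bar{\eta}}, \bq_l)^I = \ker N$, and $W_{i-1} \subseteq \ker N$.

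\emph{Step 3 (image and kernel of $\speci$).} On the level of sheaves, $\speci$ is induced by the adjunction $\bq_l \to R\psi \bq_l$ on $Y$; its cone is explicitly identified in the semistable case with the complex assembling the columns of the Rapoport--Zink $E_1$-page with $r \neq 0$. Tracking the edge homomorphism gives $\mathrm{im}(\speci) = E_\infty^{0,i}$, which in view of the monodromy-weight identification is precisely $\ker N$, proving (1). For (2), the kernel of $\speci$ is a subquotient of the $r > 0$ contribution, and by smooth-proper purity of each $Y^{(j)}$ these classes are concentrated in weights $\geq i+1$; hence $\ker(\speci) \cap W_{i-1} = 0$, so (1) upgrades to the desired isomorphism on $W_{i-1}$.

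\emph{Main obstacle.} The delicate point is Step 1: the alteration $X'$ sits over a possibly ramified extension and need not be birational to $X$, so descending the weight and monodromy filtrations through the trace splitting requires a compatibility at the level of nearby cycles rather than just on cohomology. The assumption that $X$ is regular is used crucially in Step 3, via the weight bound $H^i(X_{\bar{s}}, \bq_l) \subseteq W_i$ (Deligne), which is what forces the a priori inclusion $\mathrm{im}(\speci) \subseteq \ker N$ to be an equality.
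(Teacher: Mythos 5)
The statement you are trying to prove is stated in the paper as a \emph{conjecture}, and the paper offers no proof of it: in mixed characteristic ($\mathrm{char}(K)=0$) it is only known to follow from Deligne's monodromy-weight conjecture (Illusie in the semistable case, Flach--Morin for general regular $X$), and is therefore unconditional only in special cases (equal characteristic by Deligne, $\dim(X_\eta)\leq 2$ by Rapoport--Zink, further cases by Scholze, plus the isomorphism on $W_1$ and for $i=0,1$). Your Step 2 is exactly where this surfaces: you invoke ``the (known) local semistable monodromy-weight theorem'', i.e.\ the coincidence of the monodromy filtration of $N$ with the weight filtration on $H^i(X_{\bar\eta},\bq_l)$, but in mixed characteristic this is precisely the open conjecture. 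Granting it, your Steps 2--3 are essentially the standard derivation of the local invariant cycle theorem from the weight spectral sequence, so what you have written is (at best) a proof of the known implication ``monodromy-weight $\Rightarrow$ local invariant cycle'', not a proof of the conjecture itself.

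Step 1 also has a genuine gap. A de Jong alteration $\pi\colon X'\to X$ is finite \'etale only over a dense open $U\subseteq X$, and $U$ may be entirely contained in the generic fibre; on special fibres $\pi$ need not be generically finite, and $X_{\bar s}$, $X'_{\bar s'}$ are singular (the latter a normal crossings variety), so there is no Poincar\'e-duality or finite-flat trace making $H^i(X_{\bar s},\bq_l)$ a direct summand of $H^i(X'_{\bar s'},\bq_l)$ compatibly with $\speci$. The paper itself flags exactly this obstruction in its final remark (components of $Y\times_X Y$ lying in the special fibre, the map not being l.c.i., hence no trace a priori); this is also why the reduction from regular $X$ to the semistable case in Flach--Morin is carried out by a more careful descent argument rather than a trace splitting. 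So both the reduction step and the key input of your argument need to be repaired, and the second one cannot be, short of proving the monodromy-weight conjecture.
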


Since by \cite{weilii} one has $W_iH^i(X_{\bar{s}}, \mathbb{Q}_l)=H^i(X_{\bar{s}},
\mathbb{Q}_l)$, Conjecture \ref{ladicinvcyc}
implies that the kernel of $\speci$ is pure of weight $i$. In
\cite{weilii}(3.6), Deligne has proved the local invariant cycle
theorem in the equal characteristic case $\text{char}(K)=p$.  In the
mixed characteristic case $\text{char}(K)=0$, it is well known that
the local invariant cycle theorem is implied by Deligne's
conjecture on the purity of the monodromy filtration (the "monodromy
weight conjecture"). We refer to \cite{illusie} for this implication
in the semistable case and to \cite{flachmorin} in the general
regular case. Thus the local invariant cycle theorem also holds in
the mixed characteristic case if $\dim(X_\eta)\leq 2$ by the results
of Rapoport and Zink \cite{rapozink82} and in many more cases by the
recent work of Scholze \cite{scholze11}. Further unconditional, and
probably well known, results were recorded by Flach and Morin in
\cite{flachmorin}: If $X$ is regular and $l\neq p$ then $\speci$
induces an isomorphism on $W_1$ and is an isomorphism for $i=0,1$.


\subsection {} From now on and for the rest of this paper we assume that $K$ has characteristic $0$ and that $X\to S$ is proper and flat.  For $l=p$, it is then natural
to ask about a local invariant cycle theorem for $p-$adic
cohomology. In the case where $X\to S$ is {\em smooth} it was shown in
\cite{fm87}(4.1) that the map $\speci:
H^i(X_{\bar{s}}, \mathbb{Q}_p) \to H^i(X_{\bar{\eta}},
\mathbb{Q}_p)^I$ is an isomorphism
as a consequence of Fontaine's $C_{crys}$-comparison isomorphism in
$p$-adic Hodge theory proven by Fontaine-Messing \cite{fm87} and
Faltings \cite{fal89}.

However, it is well known that the geometric $p$-adic \'etale cohomology of
varieties over $k$ only describes the slope $0$ part of the full
$p$-adic (Weil) cohomology, which is Berthelot's rigid cohomology
$H^i_{rig}(X_s/k)$ . To have a context which is fully analogous to
the $l$-adic situation we need to construct an enlarged specialization
map as following. Here and in the following we refer to \cite{fon1},
\cite{fon2} for Fontaine's functors $D_{crys}$, $D_{st}$, $D_{pst}$,
$D_{dR}$ and the corresponding period rings.

\begin{prop}
If $X\to S$ is proper, flat and generically smooth, then there
is a functorial $\phi$-equivariant map
\begin{equation}\speci' : H^i_{rig}(X_s/k) \to D_{crys}(H^i(X_{\bar{\eta}}, \mathbb{Q}_p))\label{speciprime}\end{equation}
and a commutative diagram of $Gal(\bar{k}/k)-$modules,
we also have isomophisms
$$\lambda_s:
H^i(X_{\bar{s}},\mathbb{Q}_p) \cong
(H^i_{rig}(X_s/k)\otimes_{K_0}\hat{K_0}^{ur})^{\phi\otimes\phi=1} =:
H^i_{rig}(X_s/k)^{slope 0} $$ and
$$\lambda_{\eta}: H^i(X_{\bar{\eta}},\mathbb{Q}_p)^I \cong
(D_{crys}(H^i(X_{\bar{\eta}},\mathbb{Q}_p))\otimes_{K_0}\hat{K_0}^{ur})^{\phi\otimes\phi=1}
= :D_{crys}(H^i(X_{\bar{\eta}},\mathbb{Q}_p))^{slope 0}.$$
\label{speciconstruct}\end{prop}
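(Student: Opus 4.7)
The key task is to construct the map $\speci'$; the identifications $\lambda_s$ and $\lambda_\eta$ will then follow as formal consequences of the Katz--Messing theorem and Fontaine's classification of unramified $p$-adic representations, respectively. My plan is to settle the case $X\to S$ smooth first, using Fontaine--Messing/Faltings, and then to bootstrap to the general case via de Jong's alterations and cohomological descent. When $X\to S$ is smooth and proper, the $C_{crys}$-comparison theorem of Fontaine--Messing and Faltings yields a $(G,\phi,\mathrm{Fil})$-equivariant isomorphism $H^i(X_{\bar\eta},\mathbb{Q}_p)\otimes_{\mathbb{Q}_p} B_{crys} \cong H^i_{crys}(X_s/W(k))\otimes_{W(k)} B_{crys}$; taking $G$-invariants gives the canonical $\phi$-equivariant isomorphism
\[
D_{crys}(H^i(X_{\bar\eta},\mathbb{Q}_p)) \;\cong\; H^i_{crys}(X_s/W(k))[1/p] \;=\; H^i_{rig}(X_s/K_0),
\]
whose inverse I take as $\speci'$ in this case.

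For general $X$ (proper, flat, generically smooth), I would invoke de Jong's semistable alteration theorem to pick a proper surjective alteration $\pi\colon Y\to X$ with $Y$ admitting a proper strictly semistable model over some finite extension $R'/R$, and then iterate (taking fibre products over $X$) to form a proper hypercover $Y_\bullet\to X$ each of whose terms is semistable. For each $Y_n$, Tsuji's $C_{st}$-comparison produces a $(\phi,N)$-equivariant isomorphism
\[
H^i_{\text{log-crys}}(Y_{n,s}/W(k'))[1/p] \;\cong\; D_{st}(H^i(Y_{n,\bar\eta},\mathbb{Q}_p)),
\]
and restricting to the $N=0$ part (where $D_{st}^{N=0}=D_{crys}$) and precomposing with the canonical map from rigid cohomology to log-crystalline cohomology will give $\speci'_{Y_n}$. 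Since rigid cohomology satisfies proper cohomological descent (Tsuzuki), and $p$-adic étale cohomology does as well (Deligne), while $D_{crys}$ is exact on the relevant categories of representations, passing to the totalization will descend the maps $\speci'_{Y_n}$ to a canonical $\speci'_X$.

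For the identifications, $\lambda_s$ is a rigid-cohomology version of the Katz--Messing theorem for (possibly singular) $X_s$: after base change to $\hat K_0^{ur}$, the slope-zero part of $H^i_{rig}(X_s/K_0)$ recovers $H^i(X_{\bar s},\mathbb{Q}_p)$ as the $\phi\otimes\phi=1$ fixed points. The isomorphism $\lambda_\eta$ will come from Fontaine's classification, under which the slope-zero submodule of a filtered $(\phi,N,G)$-module corresponds, after base change to $\hat K_0^{ur}$ and passage to $\phi\otimes\phi=1$, to the maximal unramified sub-representation of $H^i(X_{\bar\eta},\mathbb{Q}_p)$, which is precisely $H^i(X_{\bar\eta},\mathbb{Q}_p)^I$. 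Commutativity of the final diagram with $\speci$ will be built into the construction by functoriality of the comparison isomorphisms. The hardest step will be the descent: one must verify that the $N=0$ factorization in the semistable case is compatible with pullback along the alteration and with the proper descent spectral sequences on both sides, so that $\speci'_X$ is truly canonical and independent of the chosen hypercover. A clean alternative would be to replace Tsuji's theorem by the prismatic / syntomic comparison maps of Bhatt--Morrow--Scholze or Colmez--Nizio{\l}, which provide such comparisons directly, at the cost of heavier machinery.
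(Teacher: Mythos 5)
Your overall strategy---reduce to the semistable case via de Jong and then descend---is the same as the paper's, but the descent step as you describe it has a genuine gap. You propose to iterate fibre products over $X$ to obtain a proper hypercover $Y_\bullet\to X$ \emph{all of whose terms are semistable}, and then to descend $\speci'$ by invoking Tsuzuki's proper cohomological descent for rigid cohomology on the special fibres together with descent for \'etale cohomology on the generic fibres. This cannot be carried out: already $X^{(1)}\times_X X^{(1)}$ may have irreducible components contained entirely in the special fibre (possibly of dimension larger than $\dim X_\eta$), so it admits no \emph{surjective} alteration by a semistable family; one can only take a semistable alteration of the closure of its generic fibre, as the paper does for $X^{(2)}$. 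Consequently the special fibres of the modified terms do not form a proper surjective hypercovering of $X_s$, Tsuzuki's descent does not apply on the rigid side, and your proposed totalization does not compute $H^i_{rig}(X_s/k)$. The paper circumvents this by descending only on the generic-fibre side: it proves (using purity of weight $i$ and $E_2$-degeneration of the descent spectral sequence) that $H^i(X_{\bar{\eta}},\mathbb{Q}_p)$ is the equalizer of $H^i(X^{(1)}_{\bar{\eta}},\mathbb{Q}_p)\rightrightarrows H^i(X^{(2)}_{\bar{\eta}},\mathbb{Q}_p)$, uses only the \emph{left} exactness of $D_{crys}$ (note that $D_{crys}$ is not exact on the merely semistable representations occurring here, so your appeal to exactness is also off), and then simply maps $H^i_{rig}(X_s/k)$ into the corresponding equalizer of rigid cohomology groups; no descent statement for rigid cohomology of $X_s$ is claimed, and the paper explicitly notes that the relevant rigid groups may all differ. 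Independence of the chosen alteration and functoriality, which you flag as ``the hardest step'' but do not address, are then settled in the paper via the injectivity of $D_{crys}(H^i(X_{\bar{\eta}},\mathbb{Q}_p))\to D_{crys}(H^i(X^{(1)}_{\bar{\eta}},\mathbb{Q}_p))$.

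Two further points on the identifications. For $\lambda_s$, Katz--Messing (a statement about smooth proper varieties) is not the relevant input, since $X_s$ is in general singular; the paper instead combines the Berthelot--Bloch--Esnault isomorphism $H^i_{rig}(X_s/k)^{[0,1)}\simeq H^i(X_s,W\co_{X_s})_{\bq}$ with Illusie's Artin--Schreier sequence for $W\co_{X_{\bar{s}}}$. For $\lambda_\eta$, the slope-zero part of $D_{crys}(V)$ does \emph{not} compute $V^I$ for an arbitrary de Rham representation (a nonsplit crystalline extension of $\mathbb{Q}_p$ by $\mathbb{Q}_p(1)$ has $V^I=0$ but one-dimensional slope-zero part), so your appeal to Fontaine's classification alone would prove a false general statement: one needs the hypothesis $Fil^0D_{dR}(V)=D_{dR}(V)$, which holds for $H^i(X_{\bar{\eta}},\mathbb{Q}_p)$ by the $C_{dR}$ comparison, together with $Fil^0B_{crys}^{\phi=1}=\mathbb{Q}_p$; this is exactly how the paper proves $\lambda_\eta$.
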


In the case where $X$ has semistable reduction the map $\speci'$ is the composite of a map
\[ H^i_{rig}(X_s/k)\to H^i_{HK}(X/S) \]
constructed by Chiarelotto in \cite{chia99}, and the $N=0$-part of
Fontaine's $C_{st}$-comparison isomorphism
\[H^i_{HK}(X/S)\cong D_{st}(H^i(X_{\bar{\eta}},\mathbb{Q}_p))\]
proved by Tsuji \cite{tsuji99}, among others. Here $H^i_{HK}(X/S)$
is the log-crystalline cohomology defined by Hyodo and Kato
\cite{hyka94}. In general, we construct $\speci'$ by descent from
the semistable case using de Jong's alterations \cite{dj96}. We
believe that the $p$-adic Hodge theory of rigid analytic spaces that is currently being developed by various authors \cite{scholze12}, \cite{Bellovin} should ultimately give a more sheaf theoretic construction of
$\speci'$ which then also works if $f$ is only proper and flat.
We note that a map
\[ H^i_{rig}(X_s/k) \to H^i_{dR}(X_\eta/K)\cong D_{dR}(H^i(X_{\bar{\eta}}, \mathbb{Q}_p))\]
is more or less immediate from the definition of rigid cohomology as
de Rham cohomology of a tube (see \cite{bcf04}[Thm. 6.6] for the general proper, flat case where this map is called a cospecialization map).

Now note that the eigenvalues of the $k$-linear Frobenius
$\phi^{[k:\mathbb{F}_p]}$ on $H^i_{rig}(X_s/k)$ are Weil numbers,
and following the argument of \cite{flachmorin}(section 10) in the
$l$-adic case one proves that the same is true for
$D_{pst}(H^i(X_{\bar{\eta}}, \mathbb{Q}_p))$. Hence one deduces
$\phi$-stable weight filtrations $W_j$ on both $H^i_{rig}(X_s/k)$
and
$$D_{crys}(H^i(X_{\bar{\eta}}, \mathbb{Q}_p))=
D_{st}(H^i(X_{\bar{\eta}},
\mathbb{Q}_p))^{N=0}=D_{pst}(H^i(X_{\bar{\eta}}, \mathbb{Q}_p))^{G,
N=0}.$$ The full $p$-adic analogue of Conjecture \ref{ladicinvcyc} (and the results mentioned after it) is then the following conjecture.

\begin{conjecture} ("p-adic local invariant cycle theorem") If $X$ is regular then
\begin{enumerate}
\item $\speci'$ is an epimorphism
\item $\speci'$ induces an isomorphism on $W_{i-1}$
\item $\speci'$ is an isomorphism for $i=0,1$
\end{enumerate}
\label{padicinvcyc}\end{conjecture}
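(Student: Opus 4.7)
The plan is to mirror the $\ell$-adic strategy of Flach--Morin \cite{flachmorin} while transporting all statements through the web of $p$-adic Hodge comparisons. The first step is a reduction to the semistable reduction case: by de Jong's alterations \cite{dj96} one finds $\pi\colon X'\to X$ with $X'$ semistable over a finite extension $R'$ of $R$. Since the construction of $\speci'$ in Proposition \ref{speciconstruct} is functorial, one obtains a commutative diagram linking $\speci'_{X}$ and $\speci'_{X'}$. Combining the trace--restriction identity for $\pi$ with Galois descent from $K'=\mathrm{Frac}(R')$ to $K$ on each of the comparison objects ($H_{rig}$, $H_{HK}$, $D_{st}$), one realises $\speci'_{X}$ as a direct summand of $\speci'_{X'}$ compatibly with Frobenius and with the weight filtration.

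In the semistable case $\speci'$ factors, by the discussion following Proposition \ref{speciconstruct}, as the Chiarelotto map $c\colon H^i_{rig}(X_s/k)\to H^i_{HK}(X/S)$ followed by the $N=0$-part of Tsuji's isomorphism $H^i_{HK}(X/S)\cong D_{st}(H^i(X_{\bar{\eta}},\bq_p))$. For part (1) it suffices to show that $c$ surjects onto $\ker N$; this should follow from the structure of the log-crystalline weight spectral sequence, whose $E_2$-term identifies $\ker N$ with a contribution coming from the rigid cohomology of the strata of the special fibre. For part (2), via $C_{st}$ the desired isomorphism on $W_{i-1}$ is equivalent to the $p$-adic monodromy weight conjecture for $H^i_{HK}$, which is known in low dimension via Rapoport--Zink \cite{rapozink82} and in many further cases after Scholze \cite{scholze11}. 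Part (3) should be obtainable unconditionally: for $i=0$ both sides compute the number of geometric connected components of $X_{\bar{\eta}}$ (equivalently, of $X_{\bar{s}}$), and for $i=1$ the weight filtration has only pieces of weight $0$ and $1$, so one identifies the weight-$0$ piece of $H^1(X_{\bar{\eta}},\bq_p)$ with $H^1_{rig}(X_s/k)$ via the N\'eron model of the Picard variety, in parallel with the $\ell$-adic argument in \cite{flachmorin}.

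The main obstacle is part (2): the $p$-adic monodromy weight conjecture is open in general, so this part of the conjecture cannot at present be established outside of the known cases. A secondary difficulty lies in the reduction step: since $\speci'$ for general regular $X$ is itself defined by descent from the semistable case, every property claimed must first be proved in the semistable situation and then transported along the alteration trace; preserving compatibility with Frobenius slopes, monodromy, and the weight filtration through this descent demands careful bookkeeping, together with independence of the choice of alteration.
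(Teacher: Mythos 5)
The statement you are trying to prove is stated in the paper as a \emph{conjecture} (Conjecture \ref{padicinvcyc}); the paper offers no proof of it, and your proposal does not close the gap either --- it is an outline whose decisive steps are exactly the open points the paper identifies. Concretely: (i) your reduction of the regular case to the semistable case hinges on a ``trace--restriction identity'' realising $\speci'_X$ as a direct summand of $\speci'_{X'}$ compatibly with weights and slopes. But a trace map $\tau_s\colon H^i_{rig}(X^{(1)}_s/k)\to H^i_{rig}(X_s/k)$ on rigid cohomology of the special fibres is not available in general --- the paper's closing Remark explains the obstructions (components of $Y\times_XY$ contained in the special fibre, so no surjective second semistable alteration; $Y\times_XY\to Y$ need not be l.c.i., so no trace a priori), and only Grosse-Kl\"onne's special cases \cite{grosse02} are known. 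Indeed, even the much weaker slope-$[0,1)$ statement (Conjecture \ref{slope01}) is reduced in the paper to precisely such a trace compatibility (\ref{trace compatibility slope01}) and left open; assuming the full trace formalism assumes away the hard part. (ii) For part (1) in the semistable case, the surjectivity of Chiarellotto's map $c$ onto the $N=0$ part does \emph{not} ``follow from the structure of the weight spectral sequence'': Chiarellotto \cite{chia99} proves it only under the $p$-adic monodromy weight conjecture of Mokrane \cite{mok93}, so your part (1) is conditional in exactly the same way as your part (2). (iii) For part (2) you cite Rapoport--Zink \cite{rapozink82} and Scholze \cite{scholze11}, but those results concern the $\ell$-adic monodromy weight conjecture; what is needed here is the $p$-adic (Mokrane) version, known only for $\dim(X_\eta)\le 2$ and for $i=0,1$. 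So the only unconditional content of your sketch is part (3) in the semistable case, which is already recorded in the paper.

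What the paper actually proves unconditionally is much more modest: Theorem \ref{slope01semistable}, namely that for semistable $X$ the map $\speci'$ is an isomorphism on the slope $[0,1)$-part, obtained by comparing the Berthelot--Bloch--Esnault map to Witt vector cohomology with the composite of Chiarellotto's map and the projection (\ref{lorenzonmap}), together with the Lorenzon/Mokrane slope results. If you want to make progress on Conjecture \ref{padicinvcyc} itself, the two concrete missing ingredients to attack are a trace map for rigid cohomology compatible with $D_{dR}$/$D_{crys}$ trace maps on the generic fibre (which would already yield Conjecture \ref{slope01} and recover results of \cite{beresrue10}), and the $p$-adic monodromy weight conjecture beyond relative dimension $2$; neither is supplied by transporting the $\ell$-adic argument of \cite{flachmorin} through the comparison isomorphisms.
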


For semistable $X$ part (1) and (3) were also conjectured by Chiarellotto \cite{chia99}[\S 4] and proven by him by assuming the $p$-adic monodromy weight conjecture stated in \cite{mok93}[Conj. 3.27]. This conjecture is known if $\dim(X_\eta)\leq 2$ and for $i=0,1$ in general. So part (3) holds for semistable $X$.
It can also be seen from his paper that $\speci'$ is an isomorphism for $i=0,1$ in the semistable reduction case.

For general regular $X$ it seems difficult to prove Conjecture \ref{padicinvcyc} without a different construction of the map $\speci'$ that embeds it into a suitable long exact  Clemens-Schmid sequence. It would also be interesting to say something more about the slopes of eigenvalues that occur in the kernel of $\speci'$ for regular $X$. Conjecture \ref{padicinvcyc} says that such eigenvalues are of weight $i$.

The main result of this article is then the following Theorem.
\begin{theorem} If $X$ has semistable reduction, then the map $\speci'$ in (\ref{speciprime}) induces an isomorphism on the slope $[0,1)$-part, i.e. $  H^i(X_s, W\co_{X_s})_\bq \simeq D_{crys}(H^i(X_{\bar{\eta}},\mathbb{Q}_p))^{[0,1)} .$
\label{slope01semistable}\end{theorem}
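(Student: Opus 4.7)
The strategy is to exploit the explicit factorization of $\speci'$ in the semistable case and analyze it on slope $[0,1)$ parts.

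By construction in the semistable case, $\speci'$ factors as
\[
H^i_{rig}(X_s/k) \xrightarrow{\alpha} H^i_{HK}(X/S) \xrightarrow{\sim} D_{st}\bigl(H^i(X_{\bar\eta}, \bq_p)\bigr)^{N=0} = D_{crys}\bigl(H^i(X_{\bar\eta}, \bq_p)\bigr),
\]
with $\alpha$ the Chiarelotto comparison and the middle isomorphism that of Tsuji. The first step is to observe that the $N=0$ projection is redundant after restriction to slope $[0,1)$. Indeed, $N\phi = p\phi N$ means $N$ lowers $\phi$-slopes by $1$, and since the $\phi$-slopes on $D_{pst}(H^i(X_{\bar\eta}, \bq_p))$ are all nonnegative (the Frobenius eigenvalues being Weil numbers of weight in $[0,2i]$), any vector of slope $<1$ is necessarily annihilated by $N$. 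Hence Tsuji's isomorphism already yields an isomorphism $H^i_{HK}(X/S)^{[0,1)} \xrightarrow{\sim} D_{crys}(H^i(X_{\bar\eta}, \bq_p))^{[0,1)}$.

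It remains to show that Chiarelotto's $\alpha$ is an isomorphism on slope $[0,1)$ parts. The plan is to identify both sides canonically with the rational Witt-vector cohomology $H^i(X_s, W\co_{X_s})_\bq$. On the log-crystalline side, one invokes the Mokrane--Nakkajima slope spectral sequence
\[
E_1^{i,j} = H^j(X_s, W\omega^i_{X_s})_\bq \;\Longrightarrow\; H^{i+j}_{HK}(X/S)_\bq,
\]
which degenerates after $\otimes\bq$; the slope $[0,1)$ piece of the abutment in total degree $i$ is the single contribution $H^i(X_s, W\omega^0_{X_s})_\bq = H^i(X_s, W\co_{X_s})_\bq$, since the log structure is trivial in degree $0$. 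On the rigid side, for semistable $X_s$ a parallel slope spectral sequence, built from the (non-log) de Rham--Witt complex and going back to Bloch--Illusie, identifies $H^i_{rig}(X_s/k)^{[0,1)}$ with the same $H^i(X_s, W\co_{X_s})_\bq$. Since $\alpha$ is induced at the complex level by the natural ``forget the log structure'' map $W\Omega^\bullet_{X_s} \to W\omega^\bullet_{X_s}$, which is the identity in degree zero, $\alpha$ restricts on slope $[0,1)$ parts to the identity on $H^i(X_s, W\co_{X_s})_\bq$, yielding the theorem.

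The main obstacle — or rather the step requiring the most care — is ensuring the compatibility of these two slope spectral sequences (rigid vs.\ log-crystalline) under $\alpha$ in degree zero, and pinning down a reference for the identification $H^i_{rig}(X_s/k)^{[0,1)} \cong H^i(X_s, W\co_{X_s})_\bq$ for the non-smooth semistable fibre $X_s$. Once these compatibilities are established, the theorem reduces to a diagram chase combining the results of Chiarelotto, Mokrane--Nakkajima, and Tsuji.
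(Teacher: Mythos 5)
Your overall skeleton matches the paper's: factor $\speci'$ through Hyodo--Kato cohomology, note that $N\phi=p\phi N$ together with nonnegativity of slopes makes the $N=0$ condition vacuous on slope $[0,1)$, identify $H^i_{HK}(X/S)^{[0,1)}$ with $H^i(X_s,W\co_{X_s})_\bq$ via the degeneration of the slope spectral sequence for the log de Rham--Witt complex, and identify $H^i_{rig}(X_s/k)^{[0,1)}$ with the same Witt vector cohomology. However, there is a genuine gap at the crucial compatibility step. First, a smaller point: since $X_s$ is a normal crossings variety, not smooth, there is no ``Bloch--Illusie slope spectral sequence built from the non-log de Rham--Witt complex'' computing $H^i_{rig}(X_s/k)$; the identification $H^i_{rig}(X_s/k)^{[0,1)}\simeq H^i(X_s,W\co_{X_s})_\bq$ is the theorem of Berthelot--Bloch--Esnault (the map (\ref{bbe}) and the isomorphism (\ref{bbeiso})), valid for arbitrary proper $k$-varieties, and it is \emph{not} obtained from a de Rham--Witt complex on $X_s$ itself.

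The more serious issue is your claim that Chiarellotto's map $\alpha$ is ``induced at the complex level by a forget-the-log-structure map $W\Omega^\bullet_{X_s}\to W\omega^\bullet_{X_s}$ which is the identity in degree zero.'' That is not how $\alpha$ is constructed, and no such description of $H^i_{rig}(X_s/k)$ is available: $\alpha$ is defined by identifying the kernel of the operator $\nu$ on the Hyodo--Steenbrink complex $WA^\bullet$ with the double complex $W\Omega^\bullet_{Y^{(\bullet)}}$ of de Rham--Witt complexes of the \emph{smooth strata} $Y^{(j)}$, and then invoking cohomological descent for rigid cohomology along the proper smooth hypercovering $Y^{(\bullet)}\to Y$ to get (\ref{chiamap}). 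Consequently, the statement you need --- that on slope $[0,1)$ the map $\alpha$ becomes the identity of $H^i(X_s,W\co_{X_s})_\bq$ under the two identifications --- is exactly the assertion that the composite $H^i_{rig}(Y/k)\to H^i_{HK}(X/S)^{N=0}\to H^i(Y,W\co_Y)_\bq$ coincides with the Berthelot--Bloch--Esnault map (\ref{bbe}). This is the content of the paper's Lemma 3.4, and it is not a formal diagram chase: it is proved by comparing three spectral sequences (the descent spectral sequence for rigid cohomology of $Y^{(\bullet)}$, the spectral sequence of the Hyodo--Steenbrink double complex, and the Witt vector cohomology spectral sequence of the strata), using that the BBE map is functorial and, on proper smooth schemes, is projection of the de Rham--Witt complex onto its degree-zero term. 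You flag this compatibility as ``the step requiring the most care,'' but the mechanism you propose for it does not exist, so the heart of the proof is missing.
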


The following conjecture is the generalization of Theorem \ref{slope01semistable}

\begin{conjecture} If $X$ is regular, the map $\speci'$ in (\ref{speciprime}) induces an isomorphism on the slope $[0,1)$-part.
	\label{slope01}\end{conjecture}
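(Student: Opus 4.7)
The plan is to leverage the factorization of $\speci'$ stated in the excerpt for semistable $X$, namely that $\speci'$ is the composite of Chiarellotto's map $\alpha : H^i_{rig}(X_s/k) \to H^i_{HK}(X/S)$ with the $N=0$ part of Tsuji's $C_{st}$-isomorphism $H^i_{HK}(X/S) \cong D_{st}(H^i(X_{\bar\eta},\bq_p))$. The first reduction I would make is on the $p$-adic Hodge side: the slope $[0,1)$-parts of $D_{st}$ and $D_{crys}$ coincide. Indeed, from the defining relation $N\phi = p\phi N$ the monodromy $N$ strictly decreases Frobenius slopes by $1$, so any element of slope strictly less than $1$ maps under $N$ to one of slope strictly less than $0$, which must vanish since all slopes on $D_{st}(H^i(X_{\bar\eta},\bq_p))$ are non-negative (they are $p$-adic valuations of Weil numbers, as recorded in the excerpt). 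Via Tsuji's isomorphism this gives $D_{crys}^{[0,1)} = H^i_{HK}(X/S)^{[0,1)}$, so the theorem reduces to showing that $\alpha$ induces an isomorphism on slope $[0,1)$-parts with both sides canonically identified with $H^i(X_s, W\co_{X_s})_\bq$.

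To exhibit the two identifications, I would invoke existing descriptions of slope $[0,1)$-parts via Witt vector cohomology. On the rigid side, the theorem of Berthelot--Bloch--Esnault (Compos. Math. 143, 2007) provides, for any proper $k$-scheme $Z$, a canonical isomorphism $H^i(Z, W\co_Z)_\bq \cong H^i_{rig}(Z/k)^{[0,1)}$; applied to the proper but singular semistable special fibre $X_s$, this handles the source of $\alpha^{[0,1)}$. For the target, Mokrane's log de Rham--Witt complex $W\omega^\bullet_{X_s}$ computes $H^i_{HK}(X/S)$, and the associated slope spectral sequence $E_1^{a,b} = H^b(X_s, W\omega^a_{X_s})_\bq \Rightarrow H^{a+b}_{HK}(X/S)_\bq$ degenerates after inverting $p$ by the usual slope-purity argument (different rows contribute to disjoint slope intervals $[a,a+1)$). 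Since the log structure does not affect the structure sheaf, $W\omega^0_{X_s} = W\co_{X_s}$, so the $a=0$ row picks out precisely the slope $[0,1)$-part, giving the required isomorphism $H^i(X_s, W\co_{X_s})_\bq \cong H^i_{HK}(X/S)^{[0,1)}$.

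The remaining, and most technical, step is the compatibility check: under the two identifications above, $\alpha^{[0,1)}$ must correspond to the identity on $H^i(X_s, W\co_{X_s})_\bq$. Both the Berthelot--Bloch--Esnault isomorphism and Mokrane's edge map are induced by the natural inclusion of $W\co_{X_s}$ as the degree-zero piece of the (log) de Rham--Witt complex, and by construction Chiarellotto's $\alpha$ is compatible with the functoriality of de Rham--Witt theory. I expect the main obstacle to lie in actually tracing these definitions through, since Chiarellotto works via de Jong alterations and a rigid-to-crystalline comparison whereas Berthelot--Bloch--Esnault proceed directly via the pro-system of Witt sheaves $\{W_n\co_{X_s}\}$. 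A likely route is to first verify everything in the smooth case — where all three identifications reduce to Illusie's classical slope spectral sequence for crystalline cohomology — and then extend to the semistable case by \v{C}ech or Mayer--Vietoris arguments over the strata of the normal crossings divisor $X_s$, using that $W\co_{X_s}$ sheafifies well and that Chiarellotto's construction is itself compatible with such stratifications.
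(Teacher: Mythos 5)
There is a genuine gap, and it is exactly the gap that separates the paper's Theorem \ref{slope01semistable} from the statement you were asked to prove. The statement concerns \emph{regular} $X$, and a regular proper flat $S$-scheme need not have semistable reduction: its special fibre $X_s$ is in general not a reduced normal crossings divisor, so there is no log-smooth structure on $X_s\to\Spec(k)$, no Hyodo--Kato cohomology $H^i_{HK}(X/S)$, no Mokrane/Hyodo--de Rham--Witt complex $W\omega^\bullet_{X_s}$, and no Chiarellotto map $H^i_{rig}(X_s/k)\to H^i_{HK}(X/S)^{N=0}$. Your entire argument is built on that factorization (``leverage the factorization of $\speci'$ \dots for semistable $X$''), so it only proves the semistable case --- and there it follows essentially the paper's own route: the Berthelot--Bloch--Esnault isomorphism (\ref{bbeiso}) on the rigid side, degeneration of the slope spectral sequence for $W\omega^\bullet_Y$ (Lorenzon) giving $H^i_{HK}(X/S)^{[0,1)}\simeq H^i(X_s,W\co_{X_s})_\bq$, the $N\phi=p\phi N$ argument showing $N=0$ is automatic on slopes $[0,1)$, and a compatibility of the two degree-zero projections, which the paper verifies by a map of spectral sequences over the hypercovering $Y^{(\bullet)}$ rather than by the smooth-case-plus-Mayer--Vietoris route you sketch.

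For regular but non-semistable $X$, the map $\speci'$ is only \emph{defined} by descent along de Jong alterations (diagram (\ref{altinduced})), so to control its slope $[0,1)$-part you would need, on the special-fibre side, a descent statement for Witt vector cohomology analogous to Proposition \ref{genericdescent} (that $H^i(X_s,W\co_{X_s})_\bq$ is the equalizer of the two maps to the Witt vector cohomology of the alteration data), or equivalently the trace-map compatibility (\ref{trace compatibility slope01}). Neither is available by your methods: as the paper points out, $X^{(1)}\times_X X^{(1)}$ can have irreducible components lying entirely in the special fibre, so one cannot choose a surjective second semistable alteration; the relevant morphisms need not be local complete intersections, so there is a priori no trace map on Witt vector or rigid cohomology; and a trace map for rigid cohomology is only known in special cases (e.g.\ finite flat, by Grosse-Kl\"onne). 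This is precisely why the paper records the regular case as Conjecture \ref{slope01} rather than a theorem; your proposal does not supply the missing descent or trace-compatibility input and therefore does not prove the stated conjecture.
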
	
	
    Note that this conjecture implies that: if $X$ is regular then the map $\speci': H^i(X_{\bar{s}}, \mathbb{Q}_p) \to H^i(X_{\bar{\eta}},
\mathbb{Q}_p)^I$ is an isomorphism.

    Conjecture \ref{slope01} is equivalent to a "compatibility of trace maps", i.e. the commutativity of the following diagram
    \begin{equation}
    \begin{CD}
    H^i(X^{(1)}_s, W\co_{X^{(1)}_s})_{\mathbb{Q}} @>sp'>>
    D_{crys}(H^i(X^{(1)}_{\bar{\eta}},\mathbb{Q}_p))^{[0,1)} \\
    @V\tau_{s}^{[0,1)}VV   @V\tau_{\eta}VV \\
    H^i(X_s, W\co_{X_s})_{\mathbb{Q}} @>sp'>>
    D_{crys}(H^i(X_{\bar{\eta}},\mathbb{Q}_p))^{[0,1)}
    \end{CD}
    \label{trace compatibility slope01}
    \end{equation}

    where $\tau_{s}^{[0,1)}$ is the trace map defined by Berthelot, Esnault and R\"ulling in
    \cite{beresrue10}, and $\tau_{\eta}$ is the trace map induced from
    the lemma \ref{tracelemma}.

    This paper is organized as follows. We first give some preliminaries on $\phi$-modules and construct $\lambda_s$, $\lambda_\eta$ in section \ref{preliminaries}, then we prove
 prove Proposition \ref{speciconstruct} and Theorem \ref{slope01} in section \ref{specisect}.

 {\bf Acknowledgement} It's my great pleasure to thank my advisor Matthias Flach for advising the author's PhD thesis(which was the genesis of this manuscript) and for the clarifications, corrections, discussions and constant encouragement.






\section{Preliminaries on $\phi$-modules and construction of $\lambda_s$, $\lambda_\eta$}\label{preliminaries}

As in the introduction we let $\phi$ be the absolute Frobenius on either $K_0$ or $\hat{K}_0^{ur}$, and a {\em $\phi$-module} over either field is a finite dimensional vector space with a bijective $\phi$-semilinear endomorphism $\phi$. The Dieudonne-Manin classification \cite{manin63} tells us that the category of $\phi$-modules over $\hat{K}_0^{ur}$ is semisimple and that the simple objects $$E_q=E_{r,s}:=\hat{K}_0^{ur}[\phi]/(\phi^r-p^s)$$ are parametrized by the set of rational numbers $q=\frac{s}{r} \in \mathbb{Q}$, called slopes in this context. For $q\in\bq$ and a $\phi$-module $D$ over $K_0$, the $q$-isotypical part of
$D\otimes_{K_0}\hat{K}_0^{ur}$ descends to $K_0$ and is called the slope $q$-part of $D$. Denote this $\phi$-submodule of $D$ by $D^{[q]}$. It is canonically a direct summand complemented by the sum of $D^{[q']}$ for $q'\neq q$. If ${\mathcal I}\subseteq\bq$ is any interval (open, closed or half-open) one defines $D^{\mathcal I}$ to be the sum of $D^{[q]}$ for $q\in \mathcal I$.

For any $\phi$-module D $$V(D):=(D\otimes_{K_0}\hat{K_0}^{ur})^{\phi\otimes\phi=1}$$ is a finite-dimensional $\bq_p$-vector space with a continuous $\Gal(\bar{k}/k)$-action, and the functor $V$ gives a Fontaine-style equivalence of categories between such representations and $\phi$-modules $D$ over $K_0$ of slope $0$, i.e. such that $D=D^{[0]}$. For any $D$ one has $V(D)=V(D^{[0]})$
and for us it is more convenient to define the slope $0$-part by this formula, i.e. we set
$$ D^{slope 0}:=V(D).$$
Of course the $\Gal(\bar{k}/k)$-action on $V(D)$ again just amounts to a single automorphism $\text{Frob}_k=1\otimes\phi^{[k:\bF_p]}=\phi^{-[k:\bF_p]}\otimes 1$.
\begin{lemma}
Let V be a finite dimensional $\mathbb{Q}_p$-vector space with a
continuous $G:=\Gal(\bar{K}/K)$-action, and such that
$D_{dR}(V)/Fil^0D_{dR}(V)=0$. Then we have an isomorphism
$$V^I \simeq D_{crys}(V)^{slope 0}.$$
\end{lemma}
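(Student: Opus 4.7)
The strategy is to apply the Bloch--Kato fundamental exact sequence
$$0 \to \mathbb{Q}_p \to B_{crys}^{\phi=1} \oplus B_{dR}^+ \to B_{dR} \to 0$$
of continuous $G$-modules (with maps $a \mapsto (a,a)$ and $(a,b) \mapsto a-b$), tensor with $V$, and take $I$-invariants so as to relate $V^I$ to $D_{crys}(V)^{slope 0}$.

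Tensoring with $V$ is exact and the left-exact sequence of $I$-invariants reads
$$0 \to V^I \to (V \otimes B_{crys}^{\phi=1})^I \oplus (V \otimes B_{dR}^+)^I \to (V \otimes B_{dR})^I.$$
Let $\widetilde{K}$ denote the completion of the maximal unramified extension of $K$, so that $\mathrm{Gal}(\bar{K}/\widetilde{K}) = I$. The last two terms are then $\mathrm{Fil}^0 D_{dR}^{\widetilde{K}}(V)$ and $D_{dR}^{\widetilde{K}}(V)$, respectively. Since Hodge--Tate weights and the Hodge filtration are preserved under the unramified base change $K \to \widetilde{K}$, the hypothesis $D_{dR}(V)/\mathrm{Fil}^0 = 0$ carries over to $\mathrm{Fil}^0 D_{dR}^{\widetilde{K}}(V) = D_{dR}^{\widetilde{K}}(V)$. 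A direct unwinding of the sequence above (the kernel of $(a,b) \mapsto a-b$ is the diagonal) then shows
$$V^I \xrightarrow{\sim} (V \otimes B_{crys}^{\phi=1})^I.$$

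Next, I would identify the right-hand side with $D_{crys}(V)^{slope 0}$. Since $\phi$ commutes with the Galois action on $B_{crys}$,
$$(V \otimes B_{crys}^{\phi=1})^I = ((V \otimes B_{crys})^I)^{\phi=1}.$$
The base-change property of $D_{crys}$ for the unramified extension $\widetilde{K}/K$ gives a natural isomorphism
$$(V \otimes B_{crys})^I \cong \hat{K}_0^{ur} \otimes_{K_0} D_{crys}(V),$$
under which $\phi$ corresponds to $\phi \otimes \phi$. Taking $\phi \otimes \phi = 1$ fixed points therefore recovers $D_{crys}(V)^{slope 0}$ by the definition given in Section~\ref{preliminaries}. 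Composing the two identifications produces the desired isomorphism.

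The main technical obstacle is the base-change identity $(V \otimes B_{crys})^I \cong \hat{K}_0^{ur} \otimes_{K_0} D_{crys}(V)$: for finite unramified extensions this is a standard property of Fontaine's functor, but for the infinite extension $\widetilde{K}/K$ it requires a Galois descent / continuity argument, using that the $G/I$-invariants of both sides agree with $D_{crys}(V)$. The $\mathrm{Gal}(\bar{k}/k)$-equivariance of the resulting isomorphism is then automatic, since every step commutes with the residual $G/I$-action on $V$ and on $\hat{K}_0^{ur}$.
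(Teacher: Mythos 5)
Your proof is correct and takes essentially the same route as the paper: the exactness of the Bloch--Kato fundamental sequence you invoke is just a repackaging of Fontaine's $Fil^0B_{crys}^{\phi=1}=\mathbb{Q}_p$, which is the paper's key citation, and both arguments rest on the same unramified base-change identifications $(B_{crys}\otimes_{\mathbb{Q}_p}V)^I\cong D_{crys}(V)\otimes_{K_0}\hat{K}_0^{ur}$ and $(B_{dR}\otimes_{\mathbb{Q}_p}V)^I\cong D_{dR}(V)\otimes_K\hat{K}^{ur}$ used to transfer the hypothesis $D_{dR}(V)=Fil^0D_{dR}(V)$ to the $I$-invariants. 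The descent issue for the completed maximal unramified extension that you flag as the main technical obstacle is likewise asserted without proof in the paper, so your write-up supplies everything the paper's own argument does.
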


\begin{proof} Let $L$ be the finite extension of $\hat{K}^{ur}_0$ fixed by $I$. Then
\begin{equation}D_{dR,L}(V)=(B_{dR}\otimes_{\bq_p}V)^I\cong D_{dR}(V)\otimes_KL=Fil^0D_{dR}(V)\otimes_KL=(B^0_{dR}\otimes_{\bq_p}V)^I\label{e1}\end{equation} by assumption. Moreover
$(B_{cris}\otimes_{\bq_p}V)^I\cong D_{crys}(V)\otimes_{K_0}\hat{K}^{ur}_0$
and
$$ (B^{\phi=1}_{cris}\otimes_{\bq_p}V)^I\cong (D_{crys}(V)\otimes_{K_0}\hat{K}^{ur}_0)^{\phi\otimes\phi=1}=D_{crys}(V)^{slope 0}.$$
By (\ref{e1}) the image of $(B^{\phi=1}_{cris}\otimes_{\bq_p}V)^I$ in $(B_{dR}\otimes_{\bq_p}V)^I$ lies in $(B^0_{dR}\otimes_{\bq_p}V)^I$, i.e. in $(Fil^0B_{crys}^{\phi=1}\otimes_{\bq_p}V )^{I}$. But by \cite{fon1}[Thm. 5.3.7]
we have $Fil^0B_{crys}^{\phi=1}=\bq_p$, hence we obtain $V^I\simeq D_{crys}(V)^{slope 0}$.
\end{proof}

\begin{cor} For any variety $X_\eta$ over $\Spec(K)$ one has a functorial isomorphism
$$\lambda_\eta:H^i(X_{\bar{\eta}}, \mathbb{Q}_p)^I \simeq
D_{crys}(H^i(X_{\bar{\eta}},\mathbb{Q}_p))^{slope 0}$$
\end{cor}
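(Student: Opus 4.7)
The plan is to apply the preceding lemma to the single representation $V := H^i(X_{\bar\eta}, \mathbb{Q}_p)$. Once we verify that $V$ satisfies the hypothesis $D_{dR}(V)/\mathrm{Fil}^0 D_{dR}(V) = 0$, the lemma directly produces the desired isomorphism $V^I \cong D_{crys}(V)^{slope\ 0}$, and functoriality in $X_\eta$ is automatic from the functoriality of the isomorphism in the lemma together with the functoriality of $p$-adic \'etale cohomology.

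To verify the vanishing hypothesis, I would invoke the $C_{dR}$-comparison isomorphism of Faltings (applicable since the running assumption of the paper is that $X_\eta$ is smooth and proper over $K$), which yields a natural isomorphism of filtered $K$-vector spaces
\[
D_{dR}(H^i(X_{\bar\eta}, \mathbb{Q}_p)) \cong H^i_{dR}(X_\eta/K),
\]
where the right hand side carries the Hodge filtration coming from the Hodge-to-de Rham spectral sequence. Since by construction $\mathrm{Fil}^j H^i_{dR}(X_\eta/K)$ is the image of $\mathbb{H}^i(X_\eta, \Omega^{\geq j}_{X_\eta/K})$, and since the Hodge complex is supported in nonnegative degrees, one has $\mathrm{Fil}^0 H^i_{dR}(X_\eta/K) = H^i_{dR}(X_\eta/K)$, hence the quotient vanishes.

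I do not expect any serious obstacle here: the corollary is essentially a reformulation of the lemma in the geometric setting, and the only non-trivial ingredient is the fact that the Hodge filtration on $H^i_{dR}(X_\eta/K)$ exhausts at $\mathrm{Fil}^0$, which is immediate from the definition. If one wants the statement in the full generality of an arbitrary (possibly singular or open) variety $X_\eta/K$ as the wording suggests, one would replace Faltings' comparison by Beilinson's $p$-adic comparison for arbitrary varieties (or Scholze's version), and still conclude via the corresponding Hodge-Deligne filtration being supported in nonnegative Hodge weights; this is the only point where additional care is needed.
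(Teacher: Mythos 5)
Your proposal matches the paper's proof: the paper also applies the preceding lemma to $V=H^i(X_{\bar{\eta}},\mathbb{Q}_p)$, invokes the $C_{dR}$-comparison (citing Beilinson precisely to cover arbitrary $X_\eta$), and concludes from $\mathrm{Fil}^0H^i_{dR}(X_\eta/K)=H^i_{dR}(X_\eta/K)$. No gaps; your closing remark about the general (possibly singular or open) case is exactly the route the paper takes.
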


\begin{proof} By the $C_{dR}$-isomorphism (see for example \cite{beil13} for the case of arbitrary $X_\eta$) we have $D_{dR}(H^i(X_{\bar{\eta}}, \mathbb{Q}_p))\cong
H^i_{dR}(X_\eta/K)$ and one always has $Fil^0H^i_{dR}(X_\eta/K) = H^i_{dR}(X_\eta/K)$.
\end{proof}

For any proper variety $X_s$ over $\Spec(k)$ Berthelot, Bloch and Esnault \cite{bbe07} have defined a functorial morphism of $\phi$-modules
\begin{equation} H^i_{rig}(X_s/k)\to H^i(X_s, W\co_{X_s})_\bq \label{bbe}\end{equation}
where $H^i(X_s, W\co_{X_s})_\bq=\varprojlim_n H^i(X_s, W_n\co_{X_s})\otimes_\bz\bq$ is Witt vector cohomology, and they show \cite{bbe07}[Thm. 1.1] that this morphism induces an isomorphism
\begin{equation}H^i_{rig}(X_s/k)^{[0,1)}\simeq H^i(X_s, W\co_{X_s})_\bq.\label{bbeiso}\end{equation}

\begin{lemma} For a proper variety $X_s$ over $\Spec(k)$ one has a functorial isomorphism
$$\lambda_s:H^i(X_{\bar{s}}, \mathbb{Q}_p) \simeq
H^i_{rig}(X_s/k)^{slope 0}.$$
\label{bbelemma}\end{lemma}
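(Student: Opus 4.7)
The plan is to factor $\lambda_s$ as the composition of two natural isomorphisms: the Berthelot--Bloch--Esnault isomorphism (\ref{bbeiso}), and an Artin--Schreier--Witt identification of $p$-adic \'etale cohomology with the unit-root part of Witt vector cohomology. The reduction to the second step goes as follows: since the functor $V(-)=(-\otimes_{K_0}\hat{K}_0^{ur})^{\phi\otimes\phi=1}$ factors through the slope $0$ summand $(-)^{[0]}$, and since $(-)^{[0]} \subseteq (-)^{[0,1)}$, the isomorphism (\ref{bbeiso}) gives
$H^i_{rig}(X_s/k)^{slope 0} = V(H^i_{rig}(X_s/k)^{[0,1)}) \cong V(H^i(X_s, W\co_{X_s})_\bq)$.
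Hence it is enough to produce a functorial isomorphism $H^i(X_{\bar{s}}, \bq_p) \simeq V(H^i(X_s, W\co_{X_s})_\bq)$.

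For this I would exploit the Artin--Schreier--Witt short exact sequence of \'etale sheaves $0 \to \bz/p^n \to W_n\co_{X_{\bar{s}}} \xrightarrow{F-1} W_n\co_{X_{\bar{s}}} \to 0$ on $X_{\bar{s},\et}$. Taking cohomology, passing to the inverse limit over $n$ (Mittag--Leffler is automatic because each $H^j_{\et}(X_{\bar{s}}, \bz/p^n)$ is finite by properness), and tensoring with $\bq_p$, one obtains a four-term sequence
\begin{equation*}
H^{i-1}(X_{\bar{s}}, W\co_{X_{\bar{s}}})_\bq \xrightarrow{F-1} H^{i-1}(X_{\bar{s}}, W\co_{X_{\bar{s}}})_\bq \to H^i(X_{\bar{s}}, \bq_p) \to H^i(X_{\bar{s}}, W\co_{X_{\bar{s}}})_\bq^{F=1} \to 0.
\end{equation*}
Flatness of $W(\bar{k})$ over $W(k)$, together with the finite-dimensionality of $H^i(X_s, W\co_{X_s})_\bq$ supplied by (\ref{bbeiso}), gives a base change isomorphism $H^i(X_{\bar{s}}, W\co_{X_{\bar{s}}})_\bq \cong H^i(X_s, W\co_{X_s})_\bq \otimes_{K_0}\hat{K}_0^{ur}$, so the rightmost term is tautologically $V(H^i(X_s, W\co_{X_s})_\bq)$.

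The heart of the argument is then the vanishing of the cokernel of $F-1$ on $H^{i-1}(X_{\bar{s}}, W\co_{X_{\bar{s}}})_\bq$, which forces the four-term sequence to collapse into the desired isomorphism. Here the slope information from BBE becomes essential: after Dieudonn\'e--Manin decomposition over $\hat{K}_0^{ur}$ into simple summands $E_{r,s}$ with $s/r\in[0,1)$, on the summands with $s>0$ the identity $(F-1)(1+F+\cdots+F^{r-1}) = F^r - 1 = p^s - 1$ shows $F-1$ is invertible (since $p^s-1$ is a unit), while on the slope $0$ summand, which takes the form $V\otimes_{\bq_p}\hat{K}_0^{ur}$ with $F = 1\otimes\phi$, surjectivity of $F-1$ reduces to that of $\phi-1$ on $\hat{K}_0^{ur}$, itself a Witt-vector lift of the surjectivity of Artin--Schreier on $\bar{k}$. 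I expect this slope bookkeeping, together with coherently coordinating the Mittag--Leffler limit, $\bq_p$-tensoring, and slope decomposition, to be the main technical obstacle. Granted it, the $\Frob_k$-equivariance of $\lambda_s$ is automatic because every step is a natural transformation of $\phi$-modules.
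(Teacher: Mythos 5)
Your argument is correct and is essentially the paper's own proof: both rest on the Artin--Schreier--Witt sequence to identify $H^i(X_{\bar{s}},\bq_p)$ with the Frobenius-fixed part of $H^i(X_{\bar{s}},W\co_{X_{\bar{s}}})_\bq$ and on the Berthelot--Bloch--Esnault isomorphism (\ref{bbeiso}) together with the slope decomposition to convert this into $H^i_{rig}(X_s/k)^{slope 0}$; the paper merely cites Illusie (Lemma 5.3 of \cite{illusie79}) for the surjectivity of $1-\phi$ after tensoring with $\bq$ (which you reprove via Dieudonn\'e--Manin) and performs the base change to $\bar{k}$ at the level of rigid rather than Witt vector cohomology. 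One small repair: on $E_{r,s}$ the operator $F^r$ is not the scalar $p^s$ but $p^s$ times a $\phi^r$-semilinear bijection (the left ideal $(\phi^r-p^s)$ is not two-sided), so the identity ``$F^r-1=p^s-1$'' is not literal; invertibility of $F^r-1$ on the positive-slope summands still holds (pick a lattice on which $p^{-s}F^r$ is bijective and use $s\ge 1$ with successive approximation), so your conclusion stands.
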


\begin{proof} On $X_{\bar{s},\et}$ one has an Artin-Schreier type short exact sequence \cite{illusie79}[Prop.3.28]
$$0 \longrightarrow \mathbb{Z}_p \longrightarrow W\co_{X_{\bar{s}}} \stackrel{1-\phi}{\longrightarrow} W\co_{X_{\bar{s}}} \longrightarrow 0 $$
whose induced long exact cohomology sequence splits into short exact sequences after tensoring with $\bq$ by \cite{illusie79}[Lemma 5.3]. Hence using the scalar extension of (\ref{bbeiso}) to $\hat{K}_0^{ur}$ we get
\begin{align*}H^i(X_{\bar{s}}, \mathbb{Q}_p) &=
H^i(X_{\bar{s}},W\co_{X_{\bar{s}}})^{\phi=1}_\bq \simeq
(H^i_{rig}(X_{\bar{s}}/\bar{k})^{[0,1)})^{\phi=1} \\
&= (H^i_{rig}(X_{\bar{s}}/\bar{k}))^{\phi=1} \simeq
(H^i_{rig}(X_s/k)\otimes_{K_0}\hat{K_0}^{ur})^{\phi\otimes\phi=1}=H^i_{rig}(X_s/k)^{slope 0}.\end{align*}
\end{proof}

Combining the above lemmas, we get the isomorphisms $\lambda_s$ and $\lambda_\eta$ in Proposition \ref{speciconstruct}.

\section{\bf Construction of the $p$-adic specialization map}\label{specisect}

It remains to construct $\speci'$. We do this first for semistable $X$ and then use de Jong's alteration to descend to the general case.

\subsection{The semistable case} In this section we assume that $X$ is semistable, i.e. its special fibre
$$X_s =: Y = \cup_{\iota\in I} Y_\iota $$ is a reduced, normal crossing divisor with smooth proper irreducible components $Y_\iota$. As usual, we let $$Y^{(j)}=\coprod_{\{\iota_1,\dots,\iota_j\}\subseteq I}Y_{\iota_1} \cap \cdots \cap Y_{\iota_j} $$
be the disjoint union of the intersections of $j$ components, which is a smooth and proper $k$-scheme.

\subsubsection{Hyodo-Kato cohomology} The Hyodo-Kato cohomology of $X/S$ (or log-crystalline cohomology of the log smooth morphism $X_s\to\Spec(k)$ where these schemes are endowed with the log-structure induced by the log structures on $X$ and $S$ given by the special fibre) can be computed by the Hyodo-deRham-Witt complex $W\omega^{\bullet}_{Y}$
\begin{equation} H^i_{HK}(X/S)\cong H^i(Y_\et,W\omega^{\bullet}_{Y})_\bq \label{hkiso}\end{equation}
on $Y_\et$ which is described in detail in \cite{hyka94}[(1.1), Thm 4.19]. The complex $$W\omega^{\bullet}_{Y}=\varprojlim_n W_n\omega^{\bullet}_{Y}$$
is a pro-complex where each $W_n\omega^{q}_{Y}$ is a coherent $W_n\co_{Y_{et}}$-module and by definition one has
\begin{equation}W_n\omega^0_{Y}=W_n\co_{Y_{et}}.\label{deg0}\end{equation}
The morphism of pro-complexes $W\omega^{\bullet}_{Y}\to W\co_{Y_{et}}[0]$ then induces a map analogous to (\ref{bbe})
\begin{equation} H^i_{HK}(X/S)\to H^i(Y_\et, W\co_{Y_\et})_\bq\cong H^i(X_s, W\co_{X_s})_\bq \label{lorenzonmap}\end{equation}
and one has the following analogue of (\ref{bbeiso}) and of Lemma \ref{bbelemma}.

\begin{lemma} For a semistable scheme $X/S$ the map (\ref{lorenzonmap}) induces a functorial isomorphism
\begin{equation}H^i_{HK}(X/S)^{[0,1)}\simeq H^i(X_s, W\co_{X_s})_\bq\label{lorenzoniso}\end{equation}
and a functorial isomorphism
$$\tilde{\lambda}_s : H^i(X_{\bar{s}}, \mathbb{Q}_p) \cong H_{HK}^i(X/S)^{slope 0}. $$
Moreover, we have $H^i_{HK}(X/S)^{N=0,[0,1)}\simeq H^i_{HK}(X/S)^{[0,1)}$.
\label{lorenzonlemma}\end{lemma}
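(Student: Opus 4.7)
My plan is to imitate, in the semistable setting, the Illusie--Raynaud / Berthelot--Bloch--Esnault treatment of the de Rham--Witt complex, using the Hyodo--de Rham--Witt complex $W\omega^\bullet_Y$ in place of $W\Omega^\bullet$, and then to extract the remaining two statements formally. Concretely, $W\omega^\bullet_Y$ fits into a slope spectral sequence
$$ E_1^{q,p} = H^p(Y_\et, W\omega^q_Y)_\bq \;\Longrightarrow\; H^{p+q}(Y_\et, W\omega^\bullet_Y)_\bq \cong H^{p+q}_{HK}(X/S), $$
and the semistable analogue of Illusie--Raynaud's theorem (available through the work on the Hyodo--de Rham--Witt complex) asserts that this degenerates at $E_1$ up to isogeny and that $E_1^{q,p}$ is exactly the slope $[q,q+1)$-part of the abutment. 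Setting $q=0$ and invoking (\ref{deg0}) to rewrite the edge term as $H^i(X_s, W\co_{X_s})_\bq$ yields the first isomorphism (\ref{lorenzoniso}), realised by the map (\ref{lorenzonmap}).

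For $\tilde{\lambda}_s$ I would repeat verbatim the argument of Lemma \ref{bbelemma}: the Artin--Schreier sequence on $X_{\bar{s},\et}$ identifies $H^i(X_{\bar{s}},\bq_p)$ with $H^i(X_{\bar{s}}, W\co_{X_{\bar{s}}})^{\phi=1}_\bq$; the first isomorphism, applied over $\bar{k}$, rewrites this as $(H^i_{HK}(X_{\bar{s}}/\bar{s})^{[0,1)})^{\phi=1}$, and any $\phi$-invariant vector sits tautologically in the slope $0$-part, so the $[0,1)$-label is redundant. Descending from $\hat{K}_0^{ur}$ to $K_0$ via Dieudonn\'e--Manin identifies this with $H^i_{HK}(X/S)^{slope 0}$. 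For the last assertion, the Hyodo--Kato commutation $N\phi = p\phi N$ implies that $N$ sends the slope $q$-part to the slope $(q-1)$-part: from $\phi^r v = p^{qr}v$ one computes $\phi^r(Nv) = p^{(q-1)r}Nv$. Since $H^i_{HK}(X/S)$ carries a $\phi$-stable $W(k)$-lattice coming from integral log-crystalline cohomology, all its slopes are $\geq 0$; hence the slope $[-1,0)$-part vanishes and $N$ kills the slope $[0,1)$-part.

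The hard step is the first one. The constructions of $\tilde{\lambda}_s$ and of the $N=0$ statement are essentially formal once the slope identification of the $E_1$-page is in hand, but that identification requires a careful semistable generalisation of Illusie--Raynaud's filtration analysis of $W\Omega^\bullet$. The crux is to control the slopes of $H^p(Y_\et, W\omega^q_Y)_\bq$ for $q\geq 1$ and to verify $E_1$-degeneration up to isogeny: this is the one place where genuine semistable input enters, and where one must either reproduce or cite the detailed Hyodo--de Rham--Witt calculations.
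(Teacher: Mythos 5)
Your proposal is correct and follows essentially the same route as the paper: the first isomorphism via $E_1$-degeneration of the slope spectral sequence for the Hyodo--de Rham--Witt complex and the identification of $E_1^{0,p}$ with the slope $[0,1)$-part (the paper handles your flagged ``hard step'' by citing Lorenzon \cite{lor02}, Thm.~3.1 and the remark after (3.1.1)), the second by repeating the Artin--Schreier argument of Lemma \ref{bbelemma}, and the third from $N\phi=p\phi N$ together with the non-negativity of the slopes of $H^i_{HK}(X/S)$. No substantive differences.
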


\begin{proof} The first statement follows from the degeneration at $E_1$ of the slopes spectral sequence
\[ E_1^{qr}=H^r(Y_\et,W\omega_Y^q)_\bq\Rightarrow H^{r+q}(Y_\et, W\omega_Y^\bullet)_\bq\]
proven in \cite{lor02}[Thm. 3.1] (see the remark in loc. cit. after (3.1.1)). The second statement follows from the first by exactly the same proof as that of Lemma \ref{bbelemma}. The third statement is an easy consequence of the relation $N\phi=p\phi N$ and the fact that $H^i_{HK}(X/S)$ has no negative slopes.
 \end{proof}

\subsubsection{The Hyodo-Steenbrink complex} In this section we relate rigid and Hyodo-Kato cohomology of $X_s$ following \cite{chia99}. By \cite{mok93}[Cor. 3.17] one has a quasi-isomorphism
\begin{equation}W_n\omega^{\bullet}_{Y}\cong W_nA^{\bullet}\label{waiso}\end{equation} where $W_nA^{\bullet}$ is the simple complex associated to the (Hyodo-Steenbrink) bicomplex
$W_nA^{i,j}$, $i,j \geq 0$, of sheaves in $Y_{et}$ given by
$$W_nA^{i,j} = \frac{W_n\tilde{\omega}^{i+j+1}_Y}{P_jW_n\tilde{\omega}^{i+j+1}_Y}.$$
Here $ W_n\tilde{\omega}^{\bullet}_Y$ is a complex defined in \cite{hyka94}[(1.4)] which carries a weight filtration $P_jW_n\tilde{\omega}^{\bullet}_Y$ defined in \cite{mok93}[Sec. 3.5]. Moreover there is a certain global section $\theta$ of
$W_n\tilde{\omega}^{1}_Y$ and the isomorphism (\ref{waiso}) is given by multiplication with $\theta$ \cite{mok93}[Prop. 3.15]. For example, using (\ref{deg0}), the projection to the degree $0$ part of both complexes in (\ref{waiso}) gives rise to a commutative diagram
\begin{equation}\begin{CD} W_n\omega^{\bullet}_{Y}@>\sim>> W_nA^{\bullet}\\@VVV @VVV\\
W_n\co_{Y_{et}} @>\wedge\theta >> W_n\tilde{\omega}^{1}_Y/P_0W_n\tilde{\omega}^{1}_Y\end{CD}\label{0proj}\end{equation}

Let $\nu_n$ be the endomorphism induced on the simple complex by
the endomorphism on $W_nA^{\bullet\bullet}$ given by the natural projection
$W_nA^{i,j} \to W_nA^{i-1,j+1}$ multiplied with $(-1)^{i+j+1}$. By \cite{mok93}[Prop. 3.18] the inverse limit
$$\nu = \varprojlim_n \nu_n $$
induces the monodromy operator $N$ in Hyodo-Kato cohomology via (\ref{waiso}) and (\ref{hkiso}).
It is clear that the kernel of $\nu_n$ on $W_nA^\bullet$ is the simple complex associated to the double subcomplex
$$\frac{P_{j+1}W_n\tilde{\omega}^{i+j+1}_Y}{P_jW_n\tilde{\omega}^{i+j+1}_Y}\subseteq W_nA^{i,j}.$$
On the other hand by \cite{mok93}(3.7) one has an isomorphism of complexes
$$Res : Gr_j^{W} W_n\tilde{\omega}^{\bullet}_Y[j] \to W_n\Omega^{\bullet}_{Y^{(j)}}(-j) $$
where $W_n\Omega^{\bullet}_{Y^{(j)}}$ is the usual de Rham-Witt complex of $Y^{(j)}$, thought of as a complex on $Y_{et}$ via the natural finite morphism $Y^{(j)}\to Y$, $(-j)$ is the Tate-shift related to the Frobenius
structure \cite{illusie79} and $Gr_j=P_j/P_{j-1}$. This leads to the following identification of the kernel of $\nu$ due to Chiarellotto.

\begin{prop}(\cite{chia99},Prop.1.8) The kernel of the operator
$$\nu_n : W_nA^{\bullet} \to  W_nA^{\bullet} $$
is isomorphic to the simple complex associated to the double complex $W_n\Omega^{\bullet}_{Y^{(\bullet)}}$
$$0 \to W_n\Omega^{\bullet}_{Y^{(1)}} \stackrel{\rho_1}{\to} W_n\Omega^{\bullet}_{Y^{(2)}} \stackrel{\rho_2}{\to} W_n\Omega^{\bullet}_{Y^{(3)}} \cdots $$
on $Y_{et}$, where $\rho_j : W_n\Omega^{\bullet}_{Y^{(j)}} \to
W_n\Omega^{\bullet}_{Y^{(j+1)}}$ is defined by
$$\rho_j = (-1)^j\sum_{1\leq i \leq j+1} (-1)^{i+1}\delta_i^* $$
and $\delta_i: Y^{(j+1)} \to Y^{(j)}$ is the inclusion $$Y_{\iota_1} \cap \cdots \cap
Y_{\iota_j} \hookrightarrow Y_{\iota_1} \cap \cdots \cap Y_{\iota_{i-1}} \cap
Y_{\iota_{i+1}} \cap \cdots \cap Y_{\iota_j}.$$
\end{prop}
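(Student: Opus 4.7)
The plan is to compute $\ker(\nu_n)$ bidegree-by-bidegree and then identify the induced differentials under Mokrane's residue isomorphism $Res$. At position $(i,j)$, the map $\nu_n$ is, up to the sign $(-1)^{i+j+1}$, the natural further quotient
\[
W_nA^{i,j} = \frac{W_n\tilde{\omega}^{i+j+1}_Y}{P_jW_n\tilde{\omega}^{i+j+1}_Y} \twoheadrightarrow \frac{W_n\tilde{\omega}^{i+j+1}_Y}{P_{j+1}W_n\tilde{\omega}^{i+j+1}_Y} = W_nA^{i-1,j+1},
\]
whose kernel is $\mathrm{Gr}^P_{j+1} W_n\tilde{\omega}^{i+j+1}_Y$. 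The residue isomorphism identifies this functorially with $W_n\Omega^i_{Y^{(j+1)}}(-(j+1))$. Summing over $i+j=n$ yields a canonical identification of the underlying graded sheaves of $\ker(\nu_n)$ with the totalization of the double complex $(i,j)\mapsto W_n\Omega^i_{Y^{(j+1)}}$, which is exactly the bigraded object underlying the complex in the statement.

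It remains to match the two partial differentials on $\ker(\nu_n)$, viewed as a subcomplex of $W_nA^{\bullet}$, with the de Rham-Witt differential on each $W_n\Omega^\bullet_{Y^{(j+1)}}$ and with the boundary $\rho_j$. The ``vertical'' ($i$-direction) differential on $W_nA^{\bullet,\bullet}$ is induced by the differential $d$ on $W_n\tilde{\omega}^\bullet_Y$; since $Res$ is by construction an isomorphism of \emph{complexes}, it intertwines $d$ on $\mathrm{Gr}^P_{j+1}W_n\tilde{\omega}^\bullet_Y$ with the de Rham-Witt differential on $W_n\Omega^\bullet_{Y^{(j+1)}}$, and this direction comes for free.

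The ``horizontal'' ($j$-direction) differential is the delicate point. On the Hyodo-Steenbrink bicomplex it is given, up to sign, by wedging a representative with the canonical class $\theta \in W_n\tilde{\omega}^1_Y$ that appears in (\ref{waiso}). Locally, if the $T_\iota$ are parameters cutting out the components $Y_\iota$, then $\theta = \sum_\iota d\log T_\iota$. A direct local computation -- take a representative in $P_{j+1}W_n\tilde{\omega}^{i+j+1}_Y$, wedge with $\theta$, and apply the Poincaré residue along a codimension-$(j+2)$ intersection of components -- shows that this operation descends to the alternating sum of the $j+2$ inclusions $\delta_i^\ast$, i.e., to $\pm\rho_j$ on $W_n\Omega^\bullet_{Y^{(j+1)}}$.

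The main obstacle is precisely this last local calculation together with its sign bookkeeping: one has to track the $(-1)^{i+j+1}$ built into the definition of $\nu_n$, the signs inherent in Mokrane's Poincaré residue, and the $d' \pm d''$ convention for the total differential of a double complex, and verify that they combine to produce exactly the factor $(-1)^j\sum_i (-1)^{i+1}$ appearing in the definition of $\rho_j$. Once this is done, the relation $\rho_{j+1}\rho_j = 0$ is the standard simplicial identity $\delta_i^\ast\delta_k^\ast = \delta_{k-1}^\ast\delta_i^\ast$ for $i<k$, and one obtains the desired isomorphism of subcomplexes of $W_nA^\bullet$.
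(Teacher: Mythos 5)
Your proposal follows essentially the same route as the paper, which likewise notes that $\ker(\nu_n)$ is the sub-bicomplex $P_{j+1}W_n\tilde{\omega}^{i+j+1}_Y/P_jW_n\tilde{\omega}^{i+j+1}_Y$ in bidegree $(i,j)$, invokes Mokrane's residue isomorphism to identify the graded pieces with the de Rham--Witt complexes of the $Y^{(\bullet)}$, and defers the remaining point (that $\theta\wedge$ induces the alternating sums of restriction maps, with the signs as stated) to Chiarellotto and Mokrane, exactly the local computation you flag. Apart from a harmless index slip (the map out of $W_n\Omega^{\bullet}_{Y^{(j+1)}}$ with $j+2$ terms is $\rho_{j+1}$, not $\rho_j$, and your total degree should not be denoted $n$, which is the Witt level), your outline matches the intended argument.
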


For the proper, smooth $k$-schemes $Y^{(j)}$ we have
\begin{equation} H^i_{rig}(Y^{(j)}/k)\cong H^i_{crys}(Y^{(j)}/k) \cong H^i(Y_\et,W\Omega^{\bullet}_{Y^{(j)}})_\bq\label{propsmooth}\end{equation}
and the simplicial scheme $Y^{(j)}$ with boundary maps $\delta_i$ is a proper, smooth hypercovering of $Y$. Since rigid cohomology satisfies cohomological descent \cite{tsuzuki03} we obtain an isomorphism
\[ H^i_{rig}(Y/k)\cong H^i(Y_\et,W\Omega^{\bullet}_{Y^{(\bullet)}})_\bq\cong H^i(Y_\et,\text{ker}(\nu)^\bullet)_\bq\]
and the exact sequence of complexes
$$0 \to \text{ker}(\nu)^\bullet \to WA^{\bullet} \stackrel{\nu}{\to} WA^{\bullet} $$
then induces Chiarellotto's map
\begin{equation} H^i_{rig}(Y/k)\to H^i(Y_\et,WA^\bullet)^{\nu=0}_\bq\cong H^i_{HK}(X/S)^{N=0}.\label{chiamap}\end{equation}
Using Fontaine-Jannsen's $C_{st}$-comparison isomorphism proven by Tsuji \cite{tsuji99}
$$H^i_{HK}(X/S)\otimes_{K_0}B_{st}\cong H^i(X_{\bar{\eta}}, \mathbb{Q}_p) \otimes_{\bq_p} B_{st} $$
we obtain an isomorphism
\[c_{st}:H^i_{HK}(X/S)\cong D_{st}(H^i(X_{\bar{\eta}},\bq_p))\]
and hence our $p$-adic specialization map
\[\speci' : H^i_{rig}(X_s/k) \to D_{st}(H^i(X_{\bar{\eta}},\bq_p))^{N=0}=D_{crys}(H^i(X_{\bar{\eta}},\bq_p)).\]
Composing (\ref{chiamap}) with (\ref{lorenzonmap}) we obtain a map
\begin{equation} H^i_{rig}(Y/k)\to H^i_{HK}(X/S)^{N=0}\to H^i(X_s, W\co_{X_s})_\bq.\label{bbe2}\end{equation}

\begin{lemma} The map (\ref{bbe2}) coincides with the map (\ref{bbe}) defined by Berthelot, Bloch and Esnault in \cite{bbe07}. In particular, we have a commutative diagram
\[\begin{CD}
H^i(X_{\bar{s}}, \mathbb{Q}_p) @= H^i(X_{\bar{s}}, \mathbb{Q}_p)\\
@V\lambda_sV\simeq V   @V\tilde{\lambda}_sV\simeq V\\
H^i_{rig}(X_s/k)^{slope 0} @>(\ref{chiamap})>>H^i_{HK}(X/S)^{N=0,slope 0}.
\end{CD}\]
\end{lemma}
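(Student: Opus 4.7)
My approach is to realize both maps as induced by the same underlying sheaf-level morphism on $Y_\et$, or equivalently to reduce to the smooth case where both are manifestly the projection $W\Omega^\bullet\to W\co[0]$. The BBE map (\ref{bbe}) is a natural transformation that, for a smooth proper scheme, coincides with this projection; its values on general proper schemes are determined by cohomological descent from smooth proper hypercoverings. On the composition side, Chiarellotto's map (\ref{chiamap}) arises at the sheaf level from the inclusion $\ker(\nu_n)\hookrightarrow W_nA^\bullet$, and via \cite{chia99}[Prop.~1.8] together with Mokrane's residue isomorphism, $\ker(\nu_n)$ is identified with the simplicial double complex $\Tot(W_n\Omega^\bullet_{Y^{(\bullet)}})$. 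The Hyodo--Kato projection (\ref{lorenzonmap}) is the chain map $W_n\omega^\bullet\to W_n\co_Y[0]$, which under the quasi-isomorphism (\ref{waiso}) and the commutative square (\ref{0proj}) factors through the $(0,0)$-projection on $W_nA^{\bullet\bullet}$ together with the $\wedge\theta$-inclusion $W_n\co_Y\hookrightarrow W_nA^{0,0}$.

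To verify (\ref{bbe})$=$(\ref{bbe2}) I first check the smooth case: when $Y$ is smooth, $\nu$ vanishes identically, Chiarellotto's map becomes the identity $H^i_{rig}(Y/k)=H^i_{HK}(X/S)$, and the HK projection collapses to the classical projection $H^i(W\Omega^\bullet_Y)_\bq\to H^i(W\co_Y)_\bq$, which is exactly BBE in that case. For the general semistable case, I would trace through the sheaf-level identifications above, invoking naturality of both maps with respect to the closed immersions $Y^{(j)}\hookrightarrow Y$ and cohomological descent for rigid cohomology \cite{tsuzuki03} along the smooth proper hypercovering $Y^{(\bullet)}\to Y$. Concretely, Mokrane's residue extracts the stratum $Y^{(j+1)}$ from each $\ker(\nu_n)$-slot $P_{j+1}W_n\tilde{\omega}^{\bullet+j+1}/P_j$, and on the degree-zero piece $P_1/P_0$ it identifies $\wedge\theta(W_n\co_Y)$ with the Mayer--Vietoris inclusion $W_n\co_Y\hookrightarrow W_n\co_{Y^{(1)}}$, yielding agreement of the two maps at cohomology level.

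Given the first assertion, the commutativity of the displayed diagram involving $\lambda_s$ and $\tilde{\lambda}_s$ is immediate: both isomorphisms are obtained from the Artin--Schreier identification $H^i(X_{\bar s},\bq_p)=H^i(X_{\bar s},W\co_{X_{\bar s}})_\bq^{\phi=1}$ followed by the inverse of either the BBE isomorphism (\ref{bbeiso}) or the Lorenzon isomorphism (\ref{lorenzoniso}). These are nothing but the restrictions of (\ref{bbe}) and (\ref{lorenzonmap}) to the slope $[0,1)$-part, so the first part of the lemma supplies Chiarellotto's map (\ref{chiamap}) as their intertwiner; passing to the $\phi=1$-component then yields exactly the required commutative square.

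\textbf{Main obstacle.} The technically delicate point is the sheaf-level compatibility between Mokrane's residue isomorphism and the $\wedge\theta$-identification of (\ref{0proj}): one must verify that the projection of $\ker(\nu_n)\cong\Tot(W_n\Omega^\bullet_{Y^{(\bullet)}})$ onto $W_nA^{0,0}$ coincides, modulo the Tate twists, with the Mayer--Vietoris inclusion $W_n\co_Y\hookrightarrow W_n\co_{Y^{(1)}}$. This is ultimately bookkeeping once one uses that $\theta\in P_1W_n\tilde{\omega}^1_Y$ by construction, but it is where the substance of the proof lies; everything else is formal manipulation of the identifications already set up in Section \ref{preliminaries} and above.
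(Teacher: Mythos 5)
Your overall strategy is the same one the paper uses -- reduce to the smooth proper strata $Y^{(q)}$, where both (\ref{bbe}) and the degree-zero projection of the de Rham--Witt complex visibly agree, and then propagate along the hypercovering $Y^{(\bullet)}\to Y$ -- but you organize it differently, and the difference matters for how much explicit computation is required. You propose to identify the two maps at the level of complexes of sheaves on $Y_\et$, which forces you to check the compatibility you flag as the ``main obstacle'': that under Mokrane's residue isomorphism the degree-zero slot $P_1W_n\tilde{\omega}^1_Y/P_0$ of $\ker(\nu_n)$ carries $\wedge\theta(W_n\co_Y)$ from (\ref{0proj}) to the natural map $W_n\co_Y\to W_n\co_{Y^{(1)}}$. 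That statement is true (it follows from $\theta\in P_1$ and the definition of $\Res$), but you have not carried it out, and it is the entire content of your argument. The paper avoids this sheaf-level bookkeeping altogether: it writes down the three spectral sequences -- the descent/weight spectral sequence $E_1^{q,p}=H^p_{rig}(Y^{(q)}/k)\Rightarrow H^{p+q}_{rig}(Y/k)$, the one for $H^p(Y_\et,WA^{\bullet,q})_\bq\Rightarrow H^{p+q}_{HK}(X/S)$ coming from (\ref{waiso}), and the Mayer--Vietoris one for Witt vector cohomology -- and observes that the composite of the two vertical maps (inclusion $\ker(\nu)\subseteq WA^{\bullet,\bullet}$ via (\ref{propsmooth}), then degree-zero projection) agrees on the $E_1$-terms with the map (\ref{bbe}) for the smooth proper schemes $Y^{(q)}$, because (\ref{bbe}) is functorial and is the degree-zero projection in the smooth proper case; agreement at $E_1$ then gives agreement on the abutment. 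So the paper's route buys you a purely formal argument leaning on the functoriality established in \cite{bbe07}, at the price of a slightly less transparent identification of where the maps live, while your route gives a cleaner conceptual picture but obliges you to actually perform the residue/$\theta$ verification you deferred. Your deduction of the displayed square from the first assertion (restrict to slope $[0,1)$, where (\ref{bbe}) and (\ref{lorenzonmap}) become the isomorphisms entering $\lambda_s$ and $\tilde{\lambda}_s$, then pass to $\phi=1$-invariants after extension to $\hat{K}_0^{ur}$) is exactly what the paper's ``in particular'' amounts to and is fine.
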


\begin{proof} One has maps of spectral sequences
\[\begin{CD} E_1^{q,p}=H^p_{rig}(Y^{(q)}/k) @.\Rightarrow @. H^{p+q}_{rig}(Y/k)\\ @VVV @. @VV(\ref{chiamap}) V\\
E_1^{q,p}=H^p(Y_\et, WA^{\bullet,q})_\bq @.\Rightarrow @. H^{p+q}_{HK}(X/S)\\ @VVV @. @VV(\ref{lorenzonmap}) V\\
E_1^{q,p}=H^p(Y_\et,W\co_{Y^{(q)}})_\bq @.\Rightarrow@.  H^{p+q}(Y_\et,W\co_Y)_\bq.
\end{CD}\]
The first vertical map is induced by the inclusion of double complexes
\[W\Omega^{\bullet}_{Y^{(\bullet+1)}}\cong \text{ker}(\nu_n)^{\bullet,\bullet}\subseteq W_nA^{\bullet,\bullet}\]
and filtering both complexes "vertically", and using (\ref{propsmooth}). The second vertical map is obtained by
projecting complexes onto the degree $0$ term. Now the map (\ref{bbe}) of Berthelot, Bloch and Esnault is functorial and is induced by projecting the de Rham-Witt complex onto its degree $0$ term if the scheme is proper and smooth. Hence the composite map of spectral sequences equals (\ref{bbe}) on the initial term and therefore also on the end term.
\end{proof}

By these we get Theorem \ref{slope01semistable}.

\subsection{General case} We now define the map $\speci':H^i_{rig}(X_s/k) \to D_{crys}(H^i(X_{\bar{\eta}}, \mathbb{Q}_p))$ in the case where $X$ is proper, flat and generically smooth over $S$. Recall from \cite{dj96} that an {\em alteration} $g:Y'\to Y$ is a proper, surjective morphism of schemes for which there exists a dense Zariski open $U\subseteq Y$ so that $g^{-1}(U)\xrightarrow{g} U$ is finite \'etale. One has the following fundamental theorem \cite{dj96}[6.5]

\begin{theorem} If $Y$ is a proper, flat scheme over $S$ then there exists an alteration $Y'\to Y$ so that $Y'$ is a semistable family (over the possibly non-connected regular base scheme $S'=\Spec(\Gamma(Y',\co_{Y'}))$).
\end{theorem}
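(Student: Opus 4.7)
The plan is to prove this by induction on the relative dimension $d=\dim(Y_\eta)$, following de Jong's original strategy. For the base case $d=0$, the scheme $Y$ is finite over $S$, and I would take $Y'$ to be the normalization of $Y_{\mathrm{red}}$; this yields a finite disjoint union of spectra of complete DVRs, which is trivially semistable over the regular one-dimensional base $S'=\Spec(\Gamma(Y',\co_{Y'}))$.

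For the inductive step, I would first reduce to the case where $Y$ is integral by replacing $Y$ with an alteration factoring through the normalization of $Y_{\mathrm{red}}$ and handling each irreducible component. The key construction is then to produce a projective morphism $f\colon Y'\to Z$ over $S$ whose generic fibers are smooth geometrically connected curves, with $Z$ proper and flat over $S$ of relative dimension $d-1$. After choosing a projective embedding of $Y$, this is achieved by projecting from a sufficiently generic linear subspace (invoking Bertini-type results for irreducibility and smoothness of the generic fibers) and blowing up the indeterminacy locus. Applying the induction hypothesis to $Z$ produces an alteration $Z''\to Z$ with $Z''$ semistable over some regular base $S'$. Base-changing $Y'\to Z$ along $Z''\to Z$ and taking further alterations reduces the problem to upgrading a family of curves over the semistable base $Z''$ to a semistable family over $S'$.

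The last step uses de Jong's three-section trick: after a further alteration, one adjoins enough disjoint sections $\sigma_1,\dots,\sigma_n$ to the curve fibration so that each geometric generic fiber becomes a stable $n$-pointed curve. Properness of the Deligne-Mumford-Knudsen moduli space $\overline{M}_{g,n}$, combined with its universal family and the valuative criterion of properness, then yields, after another alteration of the base, a semistable family of curves extending the generic one. The resulting total space acquires a strict normal crossing structure that assembles with the normal crossings already present on $Z''$ to give the desired semistability over $S'$.

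The hardest step will be this curve-fibration-plus-moduli argument: one must simultaneously arrange that sufficiently many sections exist after alteration, that semistable reduction for pointed stable curves applies relative to a higher-dimensional base rather than a DVR, and that the normal crossing divisors on the total space and on $Z''$ glue compatibly to a genuine strict semistable configuration. Tracking how successive alterations of $Y$ induce alterations of the ring of global sections $\Gamma(Y',\co_{Y'})$, and maintaining strict semistability rather than merely local semistability, requires the delicate combinatorial and geometric bookkeeping that occupies much of de Jong's original paper.
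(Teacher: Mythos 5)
This statement is not proved in the paper at all: it is de Jong's alteration theorem, quoted verbatim and invoked by citation (\cite{dj96}, Thm.\ 6.5), so the only honest comparison is between your sketch and de Jong's original argument. Your outline does reproduce the skeleton of that argument correctly — induction on relative dimension via a curve fibration $Y'\to Z$, the pointed-curve/moduli trick, and assembling semistability over the base $S'=\Spec(\Gamma(Y',\co_{Y'}))$ — so as a roadmap it is accurate.

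As a proof, however, it has genuine gaps, and they are exactly the parts you only name. First, producing the fibration $f\colon Y'\to Z$ with smooth geometrically connected generic fibres by ``generic projection plus Bertini'' is delicate here because the base is mixed-characteristic: the relevant Bertini statements must be arranged so that the fibration behaves on the whole arithmetic family (including the characteristic-$p$ special fibre), not just on the char-$0$ generic fibre, and one must keep $Z$ flat over $S$ and the projection an alteration; de Jong devotes substantial work to this. Second, the ``three-section plus $\overline{M}_{g,n}$'' step does not follow from properness of $\overline{M}_{g,n}$ and the valuative criterion when the base $Z''$ has dimension $>1$: extending a stable pointed curve from the generic point of a higher-dimensional base requires Galois alterations, level structures (or working with the stack and rigidifying automorphisms via the three-point lemma), and further modifications — this is the core of de Jong's sections on stable modifications and cannot be summarized away. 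Third, even once one has a nodal (stable) curve over a strict semistable $Z''$, the total space is \emph{not} automatically a strict semistable family over $S'$; converting it into one requires explicit local computations and additional blow-ups where the discriminant of the curve meets the normal crossing divisor of $Z''$, together with the bookkeeping of how $\Gamma(Y',\co_{Y'})$ (hence $S'$) changes under each alteration — which you acknowledge but do not carry out. So your proposal is a fair précis of de Jong's strategy, but it is not a self-contained proof of the theorem; in the context of this paper the correct move is simply to cite \cite{dj96}.
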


We call such a morphism $Y'\to Y$ a {\em semistable alteration}. Using this theorem we can construct a commutative diagram
\begin{equation}
\begin{CD}
X_{s^{(2)}}^{(2)} @>>> X^{(2)} @<<< X_{\bar{\eta}}^{(2)} \\
@VVV @VVV @VVV \\
X_{s^{(1)}}^{(1)}\times_{X_s}X_{s^{(1)}}^{(1)} @>>> X^{(1)}\times_{X}X^{(1)} @<<<
X_{\bar{\eta}}^{(1)}\times_{X_{\bar{\eta}}}X_{\bar{\eta}}^{(1)} \\
\downdownarrows && \downdownarrows && \downdownarrows \\
X_{s^{(1)}}^{(1)} @>>> X^{(1)} @<<< X_{\bar{\eta}}^{(1)} \\
@VVV @VVV @VVV \\
X_s @>>> X @<<< X_{\bar{\eta}}
\end{CD}
\label{altdiagram}\end{equation}
where $X^{(1)} \to X$ is a semistable alteration and $X^{(2)} \to
X^{(1)}\times_{X}X^{(1)} $ is a proper morphism so that $X^{(2)} \to
(\overline{X^{(1)}\times_{X}X^{(1)}})_\eta$ is a semistable alteration onto the closure of the generic fibre. Here we denote by $S^{(j)}$ the base of the semistable family $X^{(j)}$ for $j=1,2$. From this diagram we then deduce a commutative diagram for cohomology
\begin{equation}
\begin{CD}
H^i_{rig}(X^{(2)}_{s^{(2)}}/k) @>c>> H^i_{HK}(X^{(2)}/S^{(2)})^{N=0} @>c_{st}>\simeq>
D_{crys}(H^i(X^{(2)}_{\bar{\eta}},\mathbb{Q}_p)) \\
\uparrow &&  && \uparrow \\
H^i_{rig}(X_{s^{(1)}}^{(1)}\times_{X_s}X_{s^{(1)}}^{(1)}/k) &&&&
D_{crys}(H^i(X_{\bar{\eta}}^{(1)}\times_{X_{\bar{\eta}}}X_{\bar{\eta}}^{(1)}
,\mathbb{Q}_p)) \\
\upuparrows &&&& \upuparrows \\
H^i_{rig}(X_{s^{(1)}}^{(1)}/k) @>c>> H^i_{HK}(X^{(1)}/S^{(1)})^{N=0} @>c_{st}>\simeq>
D_{crys}(H^i(X^{(1)}_{\bar{\eta}},\mathbb{Q}_p)) \\
\uparrow &&&& \uparrow \\
H^i_{rig}(X_s/k) &&&& D_{crys}(H^i(X_{\bar{\eta}},
\mathbb{Q}_p))
\end{CD}
\label{altinduced}\end{equation}
where the first arrow $c$ in each row is Chiarellotto's morphism (\ref{chiamap}) and $D_{crys}=D_{crys,K}$ is Fontaine's functor over $K$. Note here that we have
$$H^i_{rig}(X_{s^{(j)}}^{(j)}/k^{(j)})\cong H^i_{rig}(X_{s^{(j)}}^{(j)}/k)$$ for $j=1,2$, and the restriction of the base field from $k^{(j)}$ to $k$ just amounts to considering the $\phi$-module $H^i_{rig}(X_{s^{(j)}}^{(j)}/k^{(j)})$ over $K_0$ instead of $K_0^{(j)}$ (assuming $S^{(j)}$ is connected for the moment). Moreover we have an isomorphism of $G$-representations
\[ H^i(X^{(j)}_{\bar{\eta}},\mathbb{Q}_p)\cong \text{Ind}_{G^{(j)}}^GH^i(X^{(j)}_{\bar{\eta}^{(j)}},\mathbb{Q}_p))   \]
where $G^{(j)}=\Gal(\bar{K}/K^{(j)})\subseteq G$ and an isomorphism
\[ D_{crys,K^{(j)}}(V)\cong D_{crys,K}(\text{Ind}^G_{G^{(j)}}V)\]
for a $G^{(j)}$-representation $V$. If $S^{(j)}$ is not connected these considerations apply to each connected component. So source and target of the horizontal maps in (\ref{altinduced}) coincide with the groups discussed in the previous paragraph and the horizontal maps coincide with $\speci'$ in the semistable case.

\begin{prop} With the above notation, the equalizer of
$$H^i(X^{(1)}_{\bar{\eta}},\mathbb{Q}_p) \rightrightarrows H^i(X^{(2)}_{\bar{\eta}},\mathbb{Q}_p)
$$ is $H^i(X_{\bar{\eta}}, \mathbb{Q}_p)$ and the
equalizer of
$$D_{crys}(H^i(X^{(1)}_{\bar{\eta}},\mathbb{Q}_p))
\rightrightarrows
D_{crys}(H^i(X^{(2)}_{\bar{\eta}},\mathbb{Q}_p))
$$ is $D_{crys}(H^i(X_{\bar{\eta}},\mathbb{Q}_p))$.
\label{genericdescent}\end{prop}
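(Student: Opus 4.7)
The plan is to deduce both statements from proper cohomological descent for $\mathbb{Q}_p$-\'etale cohomology of varieties in characteristic $0$, applied to the bottom two rows of diagram (\ref{altdiagram}). First I would reduce the equalizer statement involving $X^{(2)}_{\bar{\eta}}$ to the analogous one involving $X^{(1)}_{\bar{\eta}}\times_{X_{\bar{\eta}}}X^{(1)}_{\bar{\eta}}$. By hypothesis, $X^{(2)}_{\bar{\eta}}\to X^{(1)}_{\bar{\eta}}\times_{X_{\bar{\eta}}}X^{(1)}_{\bar{\eta}}$ is the generic fibre of a semistable alteration, so it is proper surjective and generically finite \'etale (since $K$ has characteristic $0$); the trace map for this morphism splits pullback on cohomology, making
\[ H^i(X^{(1)}_{\bar{\eta}}\times_{X_{\bar{\eta}}}X^{(1)}_{\bar{\eta}}, \mathbb{Q}_p)\hookrightarrow H^i(X^{(2)}_{\bar{\eta}}, \mathbb{Q}_p) \]
a split injection. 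Since the two composite maps $X^{(2)}_{\bar{\eta}}\rightrightarrows X^{(1)}_{\bar{\eta}}$ factor through the fibre product, the equalizer computed with $X^{(2)}_{\bar{\eta}}$ coincides with the equalizer computed with $X^{(1)}_{\bar{\eta}}\times_{X_{\bar{\eta}}}X^{(1)}_{\bar{\eta}}$.

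Next I would invoke proper cohomological descent for the alteration $X^{(1)}_{\bar{\eta}}\to X_{\bar{\eta}}$, available in characteristic $0$ for $\mathbb{Q}_p$-coefficients (SGA 4, V$^{\mathrm{bis}}$, or Deligne's Hodge III \S 6). This produces the \v{C}ech spectral sequence
\[ E_1^{p,q}=H^q\bigl(X^{(1)}_{\bar{\eta}}\times_{X_{\bar{\eta}}}\cdots\times_{X_{\bar{\eta}}} X^{(1)}_{\bar{\eta}},\mathbb{Q}_p\bigr)\Rightarrow H^{p+q}(X_{\bar{\eta}},\mathbb{Q}_p) \]
(with $p+1$ factors in the $p$-th column). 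The generic \'etale trace splitting again exhibits $H^i(X_{\bar{\eta}},\mathbb{Q}_p)$ as a direct summand of $H^i(X^{(1)}_{\bar{\eta}},\mathbb{Q}_p)$ via pullback, forcing the contribution to the abutment to be concentrated in column $0$ and yielding the edge identification
\[ H^i(X_{\bar{\eta}},\mathbb{Q}_p)=\ker\bigl(H^i(X^{(1)}_{\bar{\eta}},\mathbb{Q}_p)\rightrightarrows H^i(X^{(1)}_{\bar{\eta}}\times_{X_{\bar{\eta}}}X^{(1)}_{\bar{\eta}},\mathbb{Q}_p)\bigr). \]
Combined with the preceding step, this yields the first claim.

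Finally, the second statement is then formal: $D_{crys}(V)=(B_{crys}\otimes_{\mathbb{Q}_p}V)^G$ is left exact in $V$, hence preserves kernels of morphisms of $G$-representations and therefore equalizers. Applying $D_{crys}$ to the exact sequence just established gives the asserted identification for $D_{crys}(H^i(X_{\bar{\eta}},\mathbb{Q}_p))$. I expect the main obstacle to lie in the rigorous justification of the \v{C}ech descent step for $\mathbb{Q}_p$-coefficients of the possibly non-smooth proper variety $X_{\bar{\eta}}$; the generic \'etale trace splitting from the alteration is the essential input that makes this work.
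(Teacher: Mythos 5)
Your plan has two genuine gaps, and the second one is exactly where the content of the proposition lies. In the \v{C}ech descent spectral sequence $E_1^{r,s}=H^s(X^{(1)}_{\bar{\eta}}\times_{X_{\bar{\eta}}}\cdots\times_{X_{\bar{\eta}}}X^{(1)}_{\bar{\eta}},\mathbb{Q}_p)\Rightarrow H^{r+s}(X_{\bar{\eta}},\mathbb{Q}_p)$, the trace splitting of $H^i(X_{\bar{\eta}},\mathbb{Q}_p)\to H^i(X^{(1)}_{\bar{\eta}},\mathbb{Q}_p)$ only shows that the filtration on the abutment is concentrated in column $0$, i.e.\ $E_\infty^{0,i}=H^i(X_{\bar{\eta}},\mathbb{Q}_p)$, which gives the \emph{inclusion} of $H^i(X_{\bar{\eta}},\mathbb{Q}_p)$ into the equalizer $E_2^{0,i}$. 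It does not show $E_2^{0,i}=E_\infty^{0,i}$, i.e.\ the vanishing of the differentials $d_r\colon E_r^{0,i}\to E_r^{r,i-r+1}$ for $r\ge 2$; that vanishing is precisely the assertion that the equalizer is no larger than $H^i(X_{\bar{\eta}},\mathbb{Q}_p)$, and your splitting argument never addresses it. The paper obtains it from purity: one extends $X^{(2)}_{\bar{\eta}}\rightrightarrows X^{(1)}_{\bar{\eta}}\to X_{\bar{\eta}}$ to a proper \emph{smooth} hypercovering, so every $E_1^{r,s}$ is pure of weight $s$, all differentials from $E_2$ on vanish for weight reasons, and purity of $H^i(X_{\bar{\eta}},\mathbb{Q}_p)$ (here one uses that $X_{\bar\eta}$ is smooth proper) kills the columns $r>0$. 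Note that this weight argument cannot be run on your \v{C}ech nerve, because the fibre products $X^{(1)}_{\bar{\eta}}\times_{X_{\bar{\eta}}}\cdots$ are in general singular and their cohomology is mixed; so reducing to the \v{C}ech nerve moves you away from the setting in which the key step can be justified.

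Your first reduction step is also unjustified (though its conclusion is true a posteriori, and it is unnecessary). The scheme $X^{(1)}_{\bar{\eta}}\times_{X_{\bar{\eta}}}X^{(1)}_{\bar{\eta}}$ is in general singular, reducible and not equidimensional (components of excess dimension occur over the non-\'etale locus), so Lemma \ref{tracelemma} does not produce a trace for $X^{(2)}_{\bar{\eta}}\to X^{(1)}_{\bar{\eta}}\times_{X_{\bar{\eta}}}X^{(1)}_{\bar{\eta}}$, and pullback along an alteration of a \emph{singular} proper variety need not be injective: for the normalization $\mathbb{P}^1\to C$ of a nodal cubic, $H^1(C,\mathbb{Q}_p)\neq 0$ dies in $H^1(\mathbb{P}^1,\mathbb{Q}_p)=0$. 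This is closely related to the difficulties the paper itself records in its final Remark about traces for $Y\times_XY\to Y$ (non-lci, non-smooth source/target); a genuine trace--homotopy proof would require compatible traces for the projections of the nerve and the resulting simplicial homotopy identities, not merely the splitting of $f^*$, and you do not supply these. The reduction is moreover avoidable: a hypercovering only requires a proper surjection onto the relevant coskeleton, so $X^{(2)}_{\bar{\eta}}$ itself can serve as the degree-one term, which is how the paper proceeds. Your final step, deducing the $D_{crys}$ statement from left exactness of $D_{crys}$, agrees with the paper and is fine.
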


\begin{proof} Clearly, the second statement follows from the first since $V\mapsto D_{crys}(V)$ is a left exact functor. The quickest way to see the first statement is to refer to the theory of cohomological descent \cite{hodgeiii} for varieties over fields of characteristic $0$. The diagram $X^{(2)}_{\bar{\eta}}\rightrightarrows X^{(1)}_{\bar{\eta}}\to X_{\bar{\eta}}$ can be extended to a proper smooth hyper-covering $X^{(\bullet+1)}_{\bar{\eta}}$ of $X_{\bar{\eta}}$ and the corresponding spectral sequence
\[ E_1^{r,s}=H^s(X^{(r+1)}_{\bar{\eta}},\bq_p) \Rightarrow H^{r+s}(X_{\bar{\eta}},\bq_p)\]
induces the weight filtration on $H^{r+s}(X_{\bar{\eta}},\bq_p)$ in the sense of \cite{hodgeiii}. But since $X_{\bar{\eta}}$ is assumed proper and smooth, $H^i(X_{\bar{\eta}},\bq_p)$ is pure of weight $i$ and the spectral sequence degenerates at $E_2$, inducing an isomorphism
\[ E_2^{0,i}=H^0(H^i(X^{(\bullet+1)}_{\bar{\eta}},\bq_p))\cong H^i(X_{\bar{\eta}},\bq_p)\]
which is just a reformulation of the statement that $H^i(X_{\bar{\eta}}, \mathbb{Q}_p)$ is the equalizer of
$$H^i(X^{(1)}_{\bar{\eta}},\mathbb{Q}_p) \rightrightarrows H^i(X^{(2)}_{\bar{\eta}},\mathbb{Q}_p).$$
\end{proof}

Now the left hand column in (\ref{altinduced}) induces maps
\begin{align}H^i_{rig}(X_s/k)\to &\text{eq}\left(H^i_{rig}(X_{s^{(1)}}^{(1)}/k) \rightrightarrows H^i_{rig}(X_{s^{(1)}}^{(1)}\times_{X_s}X_{s^{(1)}}^{(1)}/k)\right)\label{eqs}\\\to &\text{eq}\left(H^i_{rig}(X_{s^{(1)}}^{(1)}/k) \rightrightarrows H^i_{rig}(X_{s^{(2)}}^{(2)}/k)\right)\notag\end{align}
and by the commutativity of (\ref{altinduced}) this last equalizer maps to the equalizer  of
$$D_{crys}(H^i(X^{(1)}_{\bar{\eta}},\mathbb{Q}_p))\rightrightarrows
D_{crys}(H^i(X_{\bar{\eta}}^{(2)},\mathbb{Q}_p))$$ which coincides with $D_{crys}(H^i(X_{\bar{\eta}},
\mathbb{Q}_p))$ by the Lemma. Hence we obtain our map $\speci'$ by descent from the
map $\speci'$ constructed in the semistable case.
Note that $H^i_{rig}(X_{s^{(1)}}^{(1)}\times_{X_s}X_{s^{(1)}}^{(1)}/k)\to
H^i_{rig}(X^{(2)}_{s^{(2)}}/k) $ may not be injective and in fact all three groups in (\ref{eqs}) may be distinct.

\begin{lemma} The map $\speci':H^i_{rig}(X_s/k) \to D_{crys}(H^i(X_{\bar{\eta}}, \mathbb{Q}_p))$ is independent of the choice of alteration $X^{(1)}\to X$ and is functorial in $X$.
\label{functo}\end{lemma}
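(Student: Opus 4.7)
The plan is to establish both claims by exploiting the cofinality of semistable alterations and the naturality of every ingredient used to build $\speci'$.

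For independence of the choice of $X^{(1)}$, I would first handle the easier situation of a \emph{dominating} morphism: suppose $h: \tilde X^{(1)} \to X^{(1)}$ is an $X$-morphism between two semistable alterations of $X$. Choose the auxiliary layer $\tilde X^{(2)}$ over $\tilde X^{(1)}\times_X\tilde X^{(1)}$ so that it dominates $X^{(2)}$ (apply de Jong once more to a component of the fibre product). All three ingredients building $\speci'$ in the semistable case, namely Chiarellotto's map (\ref{chiamap}), the Hyodo-Kato projection (\ref{lorenzonmap}), and Tsuji's $C_{st}$-isomorphism $c_{st}$, are functorial for morphisms of semistable schemes over $S$. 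Therefore $h$ induces a vertical map between the two copies of diagram (\ref{altinduced}), showing that the equalizer map built from $(\tilde X^{(1)},\tilde X^{(2)})$ factors through the equalizer map built from $(X^{(1)},X^{(2)})$ along the canonical pullback $H^i_{rig}(X^{(1)}_{s^{(1)}}/k)\to H^i_{rig}(\tilde X^{(1)}_{\tilde s^{(1)}}/k)$. Since both arrows land in the same subspace $H^i_{rig}(X_s/k)\to D_{crys}(H^i(X_{\bar\eta},\bq_p))$ and the pullback on the equalizer side factors the map from $H^i_{rig}(X_s/k)$ by the analogous cohomological descent argument on the source, the two resulting maps $\speci'$ coincide.

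To reduce the case of two \emph{arbitrary} semistable alterations $X^{(1)}$, $\tilde X^{(1)}$ of $X$ to the dominating case, form $X^{(1)}\times_X\tilde X^{(1)}$ and apply de Jong's theorem to obtain a semistable alteration $Z\to X^{(1)}\times_X\tilde X^{(1)}$. Then $Z$ dominates both $X^{(1)}$ and $\tilde X^{(1)}$, so by the previous paragraph the specialization map built from $Z$ equals the ones built from $X^{(1)}$ and from $\tilde X^{(1)}$, proving independence.

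For functoriality, given a morphism $f: X \to Y$ of proper flat generically smooth $S$-schemes, I would choose the alterations compatibly: first pick a semistable alteration $Y^{(1)}\to Y$, then a semistable alteration $X^{(1)}\to X\times_Y Y^{(1)}$ (existence by de Jong applied to $X\times_Y Y^{(1)}$), so that the composition $X^{(1)}\to X$ is a semistable alteration of $X$ with a natural $f$-lift $X^{(1)}\to Y^{(1)}$. Choose $X^{(2)}$ and $Y^{(2)}$ similarly compatibly. All horizontal maps in (\ref{altinduced}) are then functorial for $f$ by functoriality of rigid cohomology, $H^i_{HK}$, and Fontaine's $D_{crys}$. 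Passing to equalizers gives the required compatibility of $\speci'$ with $f^*$. By the independence shown above, the resulting functoriality statement is intrinsic and does not depend on the specific compatible choices.

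The main obstacle is the verification that Chiarellotto's map (\ref{chiamap}) and the $C_{st}$-comparison $c_{st}$ really are functorial for the morphisms of semistable schemes that arise from these dominating alterations; this is not a morphism of schemes over a fixed base $S$ but involves extending the base DVR. This is however encoded already in the fact, invoked implicitly when writing diagram (\ref{altinduced}), that $H^i_{rig}(X^{(j)}_{s^{(j)}}/k^{(j)})\cong H^i_{rig}(X^{(j)}_{s^{(j)}}/k)$ and $D_{crys,K^{(j)}}(V)\cong D_{crys,K}(\mathrm{Ind}^G_{G^{(j)}}V)$, so that after restriction of scalars all maps take values in $\phi$-modules over $K_0$ and in $D_{crys,K}$ of representations of $G$, making functoriality visible. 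The remaining compatibilities for the Hyodo-Steenbrink complex and for Tsuji's isomorphism are standard naturality statements of those constructions, which I would cite from \cite{mok93,tsuji99,chia99} rather than reprove.
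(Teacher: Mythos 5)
Your argument is sound and reaches the same conclusion, but the independence step takes a genuinely different route from the paper. You prove independence by a cofinality/domination argument: reduce two arbitrary semistable alterations to the case where one dominates the other by applying de Jong's theorem once more to $X^{(1)}\times_X\tilde X^{(1)}$, and then compare the two constructions along the dominating map using functoriality of Chiarellotto's map, of Hyodo--Kato cohomology and of $c_{st}$ for morphisms of semistable schemes over varying DVR bases. The paper instead uses a disjoint-union trick: $X^{(1)}\amalg\tilde X^{(1)}\to X$ is again a semistable alteration, and a diagram chase, hinging on the injectivity of the map $\alpha$ into $D_{crys}(H^i(X^{(1)}_{\bar\eta},\bq_p))\oplus D_{crys}(H^i(\tilde X^{(1)}_{\bar\eta},\bq_p))$ (a consequence of Proposition \ref{genericdescent} and left-exactness of $D_{crys}$), shows the three specialization maps coincide. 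The paper's trick buys economy: no further application of de Jong, no common refinement, and no need to invoke functoriality of the semistable-case construction along a map between two different alterations for this step; your route is more standard in spirit (cofinality of refinements) but leans harder on exactly the cross-base functoriality of $H^i_{HK}$, (\ref{chiamap}) and Tsuji's isomorphism that you flag as the main obstacle --- which is acceptable, since the paper relies on the same kind of naturality in diagram (\ref{altinduced}) and in its functoriality argument. Your functoriality step is essentially the paper's (compatible choice of alterations via the fibre product, then injectivity of $D_{crys}(H^i)$ of the base into that of the alteration to force commutativity of the bottom square). One technical correction: before applying de Jong to $X^{(1)}\times_X\tilde X^{(1)}$ or to $X\times_Y Y^{(1)}$ you should pass to the closure of the generic fibre, as the paper does with $(\overline{X^{(1)}\times_X X^{(1)}})_\eta$; the quoted form of de Jong's theorem requires a proper \emph{flat} scheme over $S$, and such fibre products can fail to be flat and can acquire components contained in the special fibre --- precisely the difficulty the paper points out in its final remark.
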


\begin{proof} In fact, given two semistable alterations $X^{(1)}\to X$ and $\tilde{X}^{(1)}\to X$ we can consider the disjoint union
$$\Tilde{\Tilde{X}}^{(1)}:= X^{(1)}\amalg \tilde{X}^{(1)}\to X\amalg X \xrightarrow{\nabla} X$$
which is also a semistable alteration. Denoting the specialization maps induced from these three alterations by $\speci'$, $\tilde{\speci}'$ and $\Tilde{\Tilde{\speci}}'$ respectively, we have a diagram
\[\begin{xy}
(35,45)*+{H^i_{rig}(\Tilde{\Tilde{X}}^{(1)}_{\Tilde{\Tilde{s}}^{(1)}}/k)}="a";
(120,45)*+{D_{crys}(H^i(\Tilde{\Tilde{X}}^{(1)}_{\bar{\eta}}, \mathbb{Q}_p))}="b";
(0,30)*+{H^i_{rig}(X^{(1)}_{s^{(1)}}/k)\oplus H^i_{rig}(\tilde{X}^{(1)}_{\tilde{s}^{(1)}}/k)}="c";
(80,30)*+{D_{crys}(H^i(X^{(1)}_{\bar{\eta}}, \mathbb{Q}_p))\oplus D_{crys}(H^i(\tilde{X}^{(1)}_{\bar{\eta}}, \mathbb{Q}_p))}="d";
(35,15)*+{H^i_{rig}(X_s/k)}="e";
(120,15)*+{D_{crys}(H^i(X_{\bar{\eta}}, \mathbb{Q}_p))}="f";
(0,0)*+{H^i_{rig}(X_s/k)\oplus H^i_{rig}(X_s/k)}="g";
(80,0)*+{D_{crys}(H^i(X_{\bar{\eta}}, \mathbb{Q}_p))\oplus
D_{crys}(H^i(X_{\bar{\eta}}, \mathbb{Q}_p))}="h";
{\ar^(.45){\Tilde{\Tilde{c}}} "a";"b"};{\ar@{=} "a";"c"};{\ar^(.44){c\oplus\tilde{c}} "c";"d"};{\ar@{=} "b";"d"};{\ar^(.45){\Tilde{\Tilde{\speci}}'} |>>>>>>>>>>>>>>>>>>>>{\hole}"e";"f"};{\ar_{\Delta} "f";"h"};{\ar_{\Delta} "e";"g"};{\ar^(.447){\speci' \oplus \tilde{\speci}'}  "g";"h"};{\ar |>>>>>>>>{\object=(1,6){}} "e";"a"};{\ar "g";"c"};{\ar "f";"b"};{\ar^(.4){\alpha} "h";"d"};
\end{xy}\]
where $\Delta$ is the diagonal map $x \to (x,x)$ and $c,\tilde{c}$ and $\Tilde{\Tilde{c}}=c\oplus\tilde{c}$ are the lower horizontal maps in (\ref{altinduced}) for the respective alterations. By construction of the specialization map, all squares except the bottom one commute. An easy diagram chase using the injectivity of  $\alpha$ then implies that the bottom square commutes as well, hence $\speci'=\tilde{\speci}'=\Tilde{\Tilde{\speci}}'$.

If $Y\to X$ is an $S$-morphism between generically smooth, proper flat $S$-schemes, $X^{(1)}\to X$ a semistable alteration we can choose a semistable alteration $Y^{(1)}\to (\overline{Y\times_XX^{(1)}})_\eta$ onto the closure of the generic fibre. Then $Y^{(1)}\to Y$ will also be a semistable alteration and we obtain a commutative diagram
\[\begin{CD} Y^{(1)} @>>> X^{(1)}\\ @VVV @VVV\\ Y @>>> X\end{CD}\]
inducing a cubical diagram of maps similar to the one above, the bottom of which is the diagram
\[\begin{CD} H^i_{rig}(X_s/k) @>\speci'>> D_{crys}(H^i(X_{\bar{\eta}}, \mathbb{Q}_p)) \\
@VVV @VVV\\
H^i_{rig}(Y_s/k) @>\speci'>> D_{crys}(H^i(Y_{\bar{\eta}}, \mathbb{Q}_p)).
\end{CD}\]
All other sides commute by known functorialities and the injectivity of $$D_{crys}(H^i(Y_{\bar{\eta}}, \mathbb{Q}_p))\to D_{crys}(H^i(Y^{(1)}_{\bar{\eta}}, \mathbb{Q}_p))$$ then implies that the bottom diagram commutes as well.
\end{proof}

By these we contruct $\speci':H^i_{rig}(X_s/k) \to D_{crys}(H^i(X_{\bar{\eta}}, \mathbb{Q}_p))$ in general and get Proposition \ref{speciconstruct}.

\begin{rem}
	
Note that we have trace maps on the generic fiber induced by the following lemma

\begin{lemma} Let  $\xi: V'\to V$ be a surjective morphism between smooth proper varieties of the same dimension $n$ over a field $K$ of characteristic $0$. Then there is a trace map  $\trace=\trace_\xi: R\xi_* \mathbb{Q}_p \to \mathbb{Q}_p$ so that the composite
	$\bq_p \to R\xi_* \mathbb{Q}_p \xrightarrow{\trace} \mathbb{Q}_p$ is multiplication by the generic degree $d_\xi$ (a locally constant function on $V$).
	\label{tracelemma}\end{lemma}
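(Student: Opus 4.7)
The plan is to construct $\trace_\xi$ from Grothendieck-Verdier duality and then identify the composite $e:\bq_p\to R\xi_*\bq_p\xrightarrow{\trace_\xi}\bq_p$ by restricting to the finite \'etale locus. Since $V'$ and $V$ are smooth and equidimensional of dimension $n$ over $K$, writing $a_X:X\to\Spec(K)$ for the structure map, one has
$$a_{V'}^!\bq_p\cong\bq_p(n)[2n]\qquad\text{and}\qquad a_V^!\bq_p\cong\bq_p(n)[2n].$$
The relation $a_{V'}^!=\xi^!\circ a_V^!$ together with the fact that $\xi^!$ commutes with Tate twists and shifts yields a canonical isomorphism $\xi^!\bq_p\cong\bq_p$. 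Since $\xi$ is proper, $R\xi_*=R\xi_!$ admits the right adjoint $\xi^!$, and the counit of this adjunction provides the desired morphism
$$\trace_\xi:\; R\xi_*\bq_p \;=\; R\xi_*\xi^!\bq_p \;\longrightarrow\; \bq_p.$$

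Next I would view the composite $e$ as an element of $\End_{D^b_c(V,\bq_p)}(\bq_p)=H^0(V,\bq_p)$, i.e.\ as a locally constant $\bq_p$-valued function on $V$. Because $\xi$ is proper, surjective and between varieties of the same dimension, it is generically finite; in characteristic $0$ a generically finite dominant morphism is generically \'etale, so there exists a dense Zariski open $U\subseteq V$ such that $\xi_U:\xi^{-1}(U)\to U$ is finite \'etale of degree $d_\xi$ on each connected component of $U$. Over $U$ the object $R\xi_*\bq_p|_U=(\xi_U)_*\bq_p$ is the locally constant sheaf of rank $d_\xi$ attached to this \'etale cover, the Verdier adjunction trace coincides with the classical trace of a finite \'etale morphism, and consequently $e|_U$ is multiplication by $d_\xi$ on $U$.

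Finally, since $U$ is dense and each connected component of the smooth variety $V$ is irreducible, $U$ meets every connected component of $V$; hence the restriction $H^0(V,\bq_p)\to H^0(U,\bq_p)$ on locally constant functions is injective and the equality $e=d_\xi$ on $U$ propagates to all of $V$. The one substantive ingredient is the compatibility of the Grothendieck-Verdier adjunction trace with the naive sheaf-theoretic trace on the finite \'etale locus — a standard statement of the six functor formalism (see e.g.\ SGA~4, XVIII) — which is precisely what identifies the abstract adjunction trace with multiplication by the generic degree and is the only non-formal step in the argument.
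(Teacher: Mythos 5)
Your argument is correct and is essentially the paper's own proof: both identify $\xi^!\bq_p\cong\bq_p$ from smoothness of the two structure maps, use properness to write $R\xi_*\bq_p\cong R\xi_!\xi^!\bq_p$ and take the adjunction counit as the trace, verify the composite is multiplication by $d_\xi$ on the dense finite \'etale locus, and conclude by the fact that maps of (locally) constant $p$-adic sheaves agreeing on a dense open agree everywhere. Your write-up merely spells out a couple of steps (generic \'etaleness in characteristic $0$, injectivity of $H^0(V,\bq_p)\to H^0(U,\bq_p)$) that the paper leaves implicit.
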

\begin{proof} This follows from the six functor formalism on the derived category of $p$-adic \'etale sheaves.  If $\eta: V \to \Spec(K)$, $\eta' = \eta \circ \xi: V' \to \Spec(K)$ denote the structure maps, we have $$\bq_p = (\eta')^!\mathbb{Q}_p(-n)[-2n] =
	\xi^!\eta^!\mathbb{Q}_p(-n)[-2n] = \xi^!\mathbb{Q}_p$$ since $\eta$ and $\eta'$ are smooth, and we have $R\xi_! = R\pi_*$ as $\xi$ is proper. So the trace map is simply
	$R\xi_*\bq_p\cong R\xi_!\xi^!\mathbb{Q}_p \to \mathbb{Q}_p $ where the last arrow is the adjunction. If $\xi$ is finite \'etale, the composite $\bq_p \to R\xi_* \mathbb{Q}_p \xrightarrow{\trace} \mathbb{Q}_p$ is multiplication by $d_\xi$ \cite{miletale}[V.1.12] and in general this is true on the dense open $U\subseteq V$ where $\xi$ is finite \'etale. But maps between (locally) constant $p$-adic sheaves agree if they agree on a dense open subset.
\end{proof}

As we mentioned, Conjecture \ref{slope01} follows from the compatibility diagram of trace maps (\ref{trace compatibility slope01}). A possible approach is to show that these trace morphisms satisfy some required homotopy identities. In fact, we mention here that Proposition \ref{genericdescent} can also be proved via homotopy argument of trace maps on the generic fiber, and we expect analogue results of Proposition \ref{genericdescent} for Witt vectors cohomology in the special fiber, i.e. the equalizer of
$H^i(X^{(1)}, W_n\co_{X^{(1)}})  \rightrightarrows H^i(X^{(2)}, W_n\co_{X^{(2)}}) )
$ is $H^i(X, W_n\co_{X}) $. However, the proof does not follow directly from the one on generic fiber, the difficulty is the following: Given
a semistable alteration $Y\to X$, the scheme $Y\times_XY$ might have
irreducible components lying entirely in the special fibre. Hence
one cannot choose a second semistable alteration $Y'\to Y\times_XY$
that is surjective. Another difficulty is that $Y\times_X Y\to Y$ is
not necessarily a local complete intersection morphism, and hence
there is a priori no trace map.

 Moreover, if we consider the cospecialization map
 \[ H^i_{rig}(X_s/k) \to H^i_{dR}(X_\eta/K)\cong D_{dR}(H^i(X_{\bar{\eta}}, \mathbb{Q}_p))\] defined in
 \cite{bcf04}, the compatibility (\ref{trace compatibility slope01}) is then the corollary of a
 more general compatibility of trace maps:
 \[
 \begin{CD}
 H^i_{rig}(X^{(1)}_s/k) @>>> D_{dR}(H^i(X^{(1)}_{\bar{\eta}}, \mathbb{Q}_p)) \\
 @V\tau_s VV   @V\tau_{\eta}VV \\
 H^i_{rig}(X_s/k) @>>> D_{dR}(H^i(X_{\bar{\eta}}, \mathbb{Q}_p))
 \end{CD}
 \]
 provided we could define the trace map $\tau_s:
 H^i_{rig}(X^{(1)}_s/k) \to H^i_{rig}(X_s/k)$ for rigid cohomology.
 The compatibility of the trace map would then implies the compatibility (\ref{trace compatibility slope01}) and deduce Conjecture \ref{slope01}, also some results in
 \cite{beresrue10}.

 In some special cases, e.g. $X^{(1)} \to X $ is finite and flat,
 Grosse-Kl\"onne in \cite{grosse02} defined a trace map for rigid cohomology(which satisfies the compability). The construction of a trace map for rigid cohomology
 in a general case is still unknown.

\end{rem}


\begin{bibdiv}

\begin{biblist}

\bibselect{all-my-references}

\end{biblist}

\end{bibdiv}

\bigskip

Fakult\"at f\"ur Mathematik, Universit\"at Regensburg, 93040 Regensburg, Germany

Email address:  Yitao.Wu@mathematik.uni-regensburg.de

\end{document}